\theoremstyle{definition}
\newtheorem{theorem}{Theorem}[section]
\newtheorem{definition}[theorem]{Definition}
\newtheorem{proposition}[theorem]{Proposition}
\newtheorem{lemma}[theorem]{Lemma}
\newtheorem{example}[theorem]{Example}
\newcommand{\Z}{\mathbb{Z}}
\renewcommand{\bar}{\overline}
\DeclareMathOperator{\End}{\mbox{End}}
\DeclareMathOperator{\Perm}{\mbox{Perm}}
\DeclareMathOperator{\Aut}{\mbox{Aut}}
\def \O {\mathfrak{O}}
\def \A {\mathfrak{A}}
\begin{document}

\title[Semidirect products in Hopf-Galois theory]{On some semidirect products of skew braces arising in Hopf-Galois theory}

\author{Paul J. Truman}
\address{School of Computer Science and Mathematics \\ Keele University \\ Staffordshire \\ ST5 5BG \\ UK}
\email{P.J.Truman@Keele.ac.uk}

\subjclass[2020]{Primary 20N99; Secondary 16T05, 12F10}

\keywords{Skew left braces, Hopf-Galois structure, Hopf-Galois theory}

\begin{abstract}
We classify skew braces that are the semidirect product of an ideal and a left ideal. As a consequence, given a Galois extension of fields $ L/K $ whose Galois group is the semidirect product of a normal subgroup $ A $ and a subgroup $ B $, we classify the Hopf-Galois structures on $ L/K $ that realize $ L^{A} $ via a normal Hopf subalgebra and $ L^{B} $ via a Hopf subalgebra. We show that the Hopf algebra giving such a Hopf-Galois structure is the smash product of these Hopf subalgebras, and use this description to study generalized normal basis generators and questions of integral module structure in extensions of local fields. 
\end{abstract}

\maketitle

\section{Introduction} \label{sec_Introduction}

Let $ L/K $ be a finite extension of fields. We say that a $ K $-Hopf algebra $ H $ gives a \textit{Hopf-Galois structure} on $ L/K $ to mean that $ L $ is an $ H $-module algebra and the natural $ K $-linear map $ L \otimes_{K} H \rightarrow \End_{K}(L) $ is an isomorphism. First introduced in \cite{CS69}, Hopf-Galois structures can be used to study inseparable or non-normal field extensions, but there is also considerable interest in their applications to extensions that are Galois in the classical sense, which are the focus of this paper.

One of the accomplishments of Hopf-Galois theory is a partial analogue of the classical Galois correspondence \cite{CS69} \cite[Chapter 7]{HAGMT}. If $ H $ gives a Hopf-Galois structure on $ L/K $ then for each Hopf subalgebra $ H' $ of $ H $ the set
\[ L^{H'} = \{ x \in L \mid h(x) = \varepsilon(h)x \mbox{ for all } h \in H' \} \]
(where $ \varepsilon $ denotes the counit of $ H $) is an intermediate field of the extension $ L/K $. This correspondence between Hopf subalgebras and fixed fields is injective and inclusion reversing, but in general not surjective: we say that an intermediate field $ L' $ of $ L/K $ is \textit{realised} by $ H $ if $ L' = L^{H'} $ for some Hopf subalgebra of $ H $. In this case we have $ [L:L'] = \dim_{K}(H') $, and the $ L' $-Hopf algebra $ L' \otimes_{K} H' $ gives a Hopf-Galois structure on $ L/L' $. Furthermore, if $ H' $ is a \textit{normal} Hopf subalgebra of $ H $ \cite[Section 3.3]{Mon93} then we can construct a quotient $ K $-Hopf algebra $ \bar{H} = H//H' $, which gives a Hopf-Galois structure on $ L'/K $ \cite[Lemma 4.1]{By02} and fits into a short exact sequence of $ K $-Hopf algebras
\begin{equation} \label{eqn_short_exact_sequence_HGS}
K \rightarrow H' \rightarrow H \rightarrow \bar{H} \rightarrow K. 
\end{equation}

It is natural to study the corresponding \textit{extension problem for Hopf-Galois structures}: given a finite extension of fields $ L/K $, an intermediate field $ L' $, and $ K $-Hopf algebras $ H' $ and $ \bar{H} $ such that $ L' \otimes_{K} H' $ gives a Hopf-Galois structure on $ L/L' $ and $ \bar{H} $ gives a Hopf-Galois structure on $ L'/K $, can we construct, or classify, $ K $-Hopf algebras $ H $ that fit into a short exact sequence as in \eqref{eqn_short_exact_sequence_HGS} and give Hopf-Galois structures on $ L/K $ that yield the given Hopf-Galois structures on $ L/L' $ and $ L'/K $? 

\[
\xymatrixcolsep{3pc} 
\xymatrixrowsep{4pc}
\xymatrix{
 \ar@/_2pc/@{-}[dd]_{H} \ar@{-}[d]^{L' \otimes H'}  L   \\
 \ar@{-}[d] L' \ar@{-}[d]^{\bar{H}} \\
  K  \\
} 
\]

In the case in which $ L/K $ is a Galois extension, the groundbreaking work of Greither and Pareigis \cite{GP87} classifies the Hopf-Galois structures on $ L/K $ (and on $ L/L' $ and the potentially non-normal extension $ L'/K $) in group theoretic terms. Using this framework Crespo, Rio, and Vela develop the theory of \textit{induced} Hopf-Galois structures on split Galois extensions \cite{CRV16a}; this is the only existing work on the extension problem for Hopf-Galois structures on separable extensions. 

The Greither-Pareigis classification reveals a connection between Hopf-Galois structures on Galois extensions and the theory of \textit{skew braces}: these are triples $ (G,\cdot,\circ) $ in which $ (G,\cdot) $ and $ (G,\circ) $ are groups and the binary operations $ \cdot $ and $ \circ $ satisfy a certain compatibility relation \cite{GV17}. This connection is developed by several authors \cite{SV18}, \cite{Ch18}, \cite{Ch19b} and made most precise by Stefanello and Trappeniers \cite{ST23}: they show that if $ L/K $ is a Galois extension with Galois group $ (G,\circ) $ then there is a bijection between Hopf-Galois structures on $ L/K $ and further binary operations $ \cdot $ on $ G $ such that $ (G,\cdot,\circ) $ is a skew brace.

If $ H $ gives a Hopf-Galois structure on $ L/K $, with corresponding skew brace $ (G,\cdot,\circ) $, then the intermediate fields $ L' $ that are Galois over $ K $ and realised by normal Hopf subalgebras of $ H $ correspond with \textit{ideals} $ (A,\cdot,\circ) $ of $ (G,\cdot,\circ) $ (kernels of skew brace homomorphisms). Hence the extension problem for Hopf-Galois structures on the tower of Galois extensions $ L/L' $ and $ L'/K $ corresponds to the extension problem for skew braces
\begin{equation} \label{eqn_short_exact_sequence_SB}
1 \rightarrow (A,\cdot,\circ) \rightarrow (G,\cdot,\circ) \rightarrow (G/A,\cdot,\circ) \rightarrow 1. 
\end{equation}
As we would expect, this extension problem is very difficult in general; however,  various notions of direct and semidirect products of skew braces have been formulated. In this work we use these notions to classify families of extensions of Hopf-Galois structures on split Galois extensions. 

More precisely: in Section \ref{sec_semidirect} we classify skew braces $ (G,\cdot,\circ) $ that are the semidirect product of an ideal $ (A,\cdot,\circ) $ and a \textit{left ideal} $ (B,\cdot,\circ) $. This is equivalent to classifying Hopf-Galois structures on a Galois extension $ L/K $ with Galois group $ (G,\circ) \cong (A,\circ) \rtimes (B,\circ) $ that realise $ L^{(A,\circ)} $ via a normal Hopf subalgebra and realise $ L^{(B,\circ)} $ via a Hopf subalgebra. Since the extension $ L^{(A,\circ)}/K $ is also Galois with Galois group $ (B,\circ) $, we obtain solutions to the extension problem for Hopf-Galois structures on the tower of Galois extensions $ L/L^{(A,\circ)} $ and $ L^{(A,\circ)}/K $; we explore this perspective in Section \ref{sec_examples}. 

\[
\xymatrixcolsep{3pc} 
\xymatrixrowsep{4pc}
\xymatrix{
& \ar@{-}[dl] L \ar@{-}[dr] & \\
 L^{(A,\circ)} \ar@{-}[dr] & & \ar@{-}[dl] L^{(B,\circ)} \\
& K & \\
} 
\]

We identify several existing classification results as instances of our construction; in particular, in Section \ref{sec_induced_HGS} we interpret the theory of induced Hopf-Galois structures \cite{CRV16a} in the context of skew braces. Finally, in Section \ref{sec_smash} we show that the Hopf algebras that arise via our construction are isomorphic to \textit{smash products} of certain Hopf subalgebras, and show how this description can be applied to address questions concerning integral module structure in extensions of local or global fields.

\section{Skew braces and Hopf-Galois structures on Galois extensions} \label{sec_skew_braces}

A \textit{(left) skew brace} is a triple $ (G,\cdot,\circ) $ in which $ (G,\cdot) $ and $ (G,\circ) $ are groups and 

\begin{equation} \label{eqn_skew_brace}
x \circ (y \cdot z) = (x \circ y) \cdot x^{-1} \cdot (x \circ z) \mbox{ for all } x,y,z \in G, 
\end{equation}

\noindent where $ x^{-1} $ denotes the inverse of $ x $ with respect to $ \cdot $. This inverse need not coincide with the inverse of $ x $ with respect to $ \circ $, which we denote by $ \bar{x} $. The identity elements with respect to $ \cdot $ and $ \circ $ do coincide; we denote this common identity by $ e $. We suppress the notation $ \cdot $ wherever possible. 

To further ease notation we shall often denote a skew brace $ (G,\cdot,\circ) $ simply by $ G $, writing $ (G,\cdot) $ or $ (G,\circ) $ when we wish to specify one of the group structures. Similarly, we shall write $ \Aut(G) $ for the group of skew brace automorphisms of $ G $ (that is: bijections on $ G $ that respect both $ \cdot $ and $ \circ $); this is a subgroup of each of $ \Aut(G,\cdot) $ and $ \Aut(G,\circ) $.

If $ G $ is a skew brace then there is a homomorphism $ \gamma : (G,\circ) \rightarrow \Aut(G,\cdot) $ defined by $ \gamma_{x}(y) = x^{-1}(x \circ y) $ for all $ x,y \in G $; we call this the \textit{$ \gamma $-function} of the skew brace. 

A subset $ A $ of $ G $ is called a \textit{left ideal} of $ G $ if it is a subgroup with respect to either operation and satisfies $ \gamma_{x}(A)=A $ for all $ x \in G $; it then follows that $ A $ is a subgroup with respect to both operations. A left ideal $ A $ of $ G $ is called a \textit{strong left ideal} if 
$ (A,\cdot) \trianglelefteq (G,\cdot) $, and an \textit{ideal} if in addition $ (A,\circ) \trianglelefteq (G,\circ) $; if $ A $ is an ideal of $ G $ then we naturally obtain a quotient skew brace $ (G/A, \cdot, \circ) $. 

Now we describe the correspondence between finite skew braces and Hopf-Galois structures on Galois extensions, following Stefanello and Trappeniers \cite{ST23}. Let $ L/K $ be a Galois extension with Galois group $ (G,\circ) $. By \cite[Theorem 3.1]{ST23} there is a bijection between Hopf-Galois structures on $ L/K $ and binary operations $ \cdot $ on the set $ G $ such that $ (G,\cdot,\circ) $ is a skew brace. By viewing distinct binary operations as giving distinct skew braces, we may interpret this result as a bijection between Hopf-Galois structures on $ L/K $ and skew braces $ (G,\cdot,\circ) $. We shall say that the Hopf-Galois structure corresponding to such a skew brace has \textit{type} $ (G,\cdot) $. The Hopf algebra giving this Hopf-Galois structure is obtained via Galois descent (see \cite[Section 2.1]{ST23}): the group algebra $ L[G,\cdot] $ is an $ L $-Hopf algebra on which the group $ (G,\circ) $ acts semilinearly by the rule
\begin{equation} \label{eqn_skew_brace_HGS_action}
x \star \sum_{y \in G} c_{y} y = \sum_{y \in G} x(c_{y}) \gamma_{x}(y); 
\end{equation}
since this action is compatible with the Hopf algebra structure maps on $ L[G,\cdot] $ the fixed ring $ H = L[G,\cdot]^{(G,\circ)} $ is a $ K $-Hopf algebra, which acts on $ L $ via the rule
\begin{equation} \label{eqn_skew_brace_HGS_action_on_L}
\left( \sum_{y \in G} c_{y} y \right) [t] = \sum_{y \in G} c_{y} y[t]
\end{equation}
to give a Hopf-Galois structure on $ L/K $. 

Under this correspondence the left ideals of $ G $ correspond with Hopf subalgebras of $ H $ (and hence to intermediate fields realised by $ H $) via the rule $ A \mapsto H_{A} = L[A,\cdot]^{(G,\circ)} $. Since a left ideal $ A $ is also a subgroup of $ (G,\circ) $, we have two apparently different ways of obtaining an intermediate field from $ A $: we can form the fixed field $ L^{(A,\circ)} $ via Galois theory, or the fixed field $ L^{H_{A}} $ of the Hopf subalgebra $ H_{A} $. A beautiful facet of this theory is that these fixed fields coincide, so that we can write $ L^{A} $ without ambiguity. A Hopf subalgebra $ H_{A} $ is normal if and only if $ A $ is a strong left ideal of $ G $. In particular, if $ A $ is an ideal of $ G $ then $ H//H_{A} $ gives a Hopf-Galois structure on the Galois extension $ L^{A}/K $; as we would hope, this Hopf-Galois structure corresponds to the quotient skew brace $ G/A $. 

Finally, we recall that each skew brace $ G=(G,\cdot,\circ) $ has an \textit{opposite} skew brace $ G^{op}=(G,\cdot^{op},\circ) $ \cite{KT20}; the Hopf-Galois structures on $ L/K $ corresponding to $ G $ and $ G^{op} $ are also called opposites of one another, and coincide if and only if $ (G,\cdot) $ is abelian.

\section{Some semidirect products of skew braces} \label{sec_semidirect}

We recall from \cite[Proposition 2.2]{FP24} that a skew brace $ G=(G,\cdot,\circ) $ is said to be the \textit{internal semidirect product} of an ideal $ A $ and a sub skew brace $ B $ to mean that
\begin{itemize} 
\item $ A \cap B = \{ e \} $;
\item $ A \cdot B = G $;
\item $ A \circ B = G $.
\end{itemize}

The first of these conditions, together with either of the other two, implies the third. As we would expect, there is a corresponding notion of external semidirect product of skew braces $ A $ and $ B $; these are classified in \cite[Proposition 4.2]{FP24} requiring intricate relations amongst the $ \gamma $-functions of these skew braces and various actions of one on the other. We shall classify skew braces that are the internal semidirect product of an ideal and a \textit{left ideal}; these hypotheses simplify the construction of the corresponding external semidirect products, whilst still allowing for a range of behaviour. 

In order to study the corresponding notion of external semidirect product, we establish some notation. 

\begin{definition} \label{defn_external_SDP}
Given skew braces $ A=(A,\cdot,\circ) $ and $ B=(B,\cdot,\circ) $ and group homomorphisms
\begin{eqnarray*}
\varphi : (B,\circ) \rightarrow \Aut(A,\circ) \\
\theta : (B,\cdot) \rightarrow \Aut(A,\cdot),
\end{eqnarray*}
let $ A \rtimes_{\theta}^{\varphi} B $ denote the Cartesian product $ A \times B $ together with the operations
\begin{eqnarray*}
(a,b) \circ (a',b') = (a \circ \varphi_{b}(a'), b \circ b') \\
(a,b) \cdot (a',b') = (a \cdot \theta_{b}(a'), b \cdot b').
\end{eqnarray*}
(Here $ \varphi_{b} = \varphi(b) \in \Aut(A,\circ) $, and similarly for $ \theta_{b} $.)
\end{definition}

In general $ A \rtimes_{\theta}^{\varphi} B $ need not be a skew brace (we obtain a criterion in Theorem \ref{thm_external_semidirect}). However, we find that if it is a skew brace then it is an internal semidirect product. 

\begin{proposition} \label{prop_external_SDP_is_internal_SDP}
Suppose that $ A \rtimes_{\theta}^{\varphi} B $ is a skew brace. Then $ (A,e) $ is an ideal, $ (e,B) $ is a left ideal, and $ A \rtimes_{\theta}^{\varphi} B $ is the internal semidirect product of these. 
\end{proposition}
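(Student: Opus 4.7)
The plan is to verify the three claims by direct computation, exploiting the fact that when either operation is restricted to elements of the form $(a,e)$ or $(e,b)$ the ``crossing'' terms $\theta_{b}(\cdot)$ and $\varphi_{b}(\cdot)$ either disappear or collapse because $\theta_{b}, \varphi_{b}$ are automorphisms and hence preserve the identity.

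First I would observe that for $a,a' \in A$ we have $(a,e)\circ (a',e) = (a \circ a', e)$ and $(a,e)\cdot (a',e) = (a \cdot a', e)$, so $(A,e)$ is closed under both operations and forms a sub skew brace isomorphic to $A$. Similarly, since $\varphi_{b}(e) = e$ and $\theta_{b}(e) = e$, we have $(e,b)\circ (e,b') = (e, b \circ b')$ and $(e,b)\cdot (e,b') = (e, bb')$, so $(e,B)$ is a sub skew brace isomorphic to $B$. The inverses in $(G,\cdot)$ and $(G,\circ)$ of elements of the form $(a,e)$ or $(e,b)$ then also remain in these sets.

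Next I would check the normality of $(A,e)$ in both group structures. Using the inverse $(a',b')^{-1} = (\theta_{b'^{-1}}(a'^{-1}), b'^{-1})$ a direct calculation gives
\[ (a',b') \cdot (a,e) \cdot (a',b')^{-1} = (a' \cdot \theta_{b'}(a) \cdot a'^{-1}, e), \]
which lies in $(A,e)$ since $\theta_{b'}(a) \in A$ and $A$ is closed under $\cdot$. The analogous computation with the $\circ$-inverse $(\varphi_{\bar{b'}}(\bar{a'}), \bar{b'})$ yields a conjugate in $(A,e)$, showing $(A,e) \trianglelefteq (G,\circ)$. I would then verify $\gamma$-invariance: expanding $\gamma_{(a',b')}((a,e))$ gives an element of the form $(\theta_{b'^{-1}}(\gamma^{A}_{a'}(\varphi_{b'}(a))), e) \in (A,e)$, while for the other factor the $A$-coordinate of $\gamma_{(a',b')}((e,b))$ collapses to $\theta_{b'^{-1}}(a'^{-1} \cdot a') = e$, leaving $(e, b'^{-1}(b' \circ b)) \in (e,B)$. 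Together with the preceding paragraph these establish that $(A,e)$ is an ideal and $(e,B)$ is a left ideal.

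Finally, the three internal semidirect product conditions are immediate: $(A,e)\cap(e,B)=\{(e,e)\}$ by inspection, and the identities $(a,e)\cdot(e,b) = (a,b) = (a,e)\circ(e,b)$ (which follow because $\theta_{e}$ and $\varphi_{e}$ are the identity automorphism) give both $(A,e)\cdot(e,B) = G$ and $(A,e)\circ(e,B) = G$. The main ``obstacle'' is really only a bookkeeping one: one must be careful to invoke the hypothesis that $\theta$ and $\varphi$ land in $\Aut(A,\cdot)$ and $\Aut(A,\circ)$ (and not just arbitrary maps of sets) at precisely the points where we need conjugates and $\gamma$-images of elements of $(A,e)$ to stay in $(A,e)$, and where we need $(e,b)^{-1}$ and $(e,b)^{\bar{-1}}$ to remain of the form $(e,\cdot)$.
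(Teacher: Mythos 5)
Your proof is correct and follows essentially the same route as the paper: the paper likewise observes that each group structure on $A \rtimes_{\theta}^{\varphi} B$ is the usual external semidirect product of groups (giving the normality of $(A,e)$, the trivial intersection, and the two product conditions), and then computes $\gamma_{(a,b)}(c,d) = (\theta_{b}^{-1}(\gamma_{a}(\varphi_{b}(c))), \gamma_{b}(d))$ to conclude that both $(A,e)$ and $(e,B)$ are $\gamma$-invariant. You simply spell out the group-theoretic bookkeeping that the paper dismisses as clear; the computations match.
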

\begin{proof}
It is clear that, as groups, $ A \rtimes_{\theta}^{\varphi} B $ is the semidirect product of the normal subgroup $ (A,e) $ and the subgroup $ (e,B) $ with respect to each of the operations. We must show that each of these sets is closed under the $ \gamma $-function of $ A \rtimes_{\theta}^{\varphi} B $. This defined by
\begin{eqnarray*}
\gamma_{(a,b)}(c,d) & = & (a,b)^{-1} ( (a,b) \circ (c,d) ) \\
& = & (\theta_{b}^{-1}(a^{-1}), b^{-1}) (a \circ \varphi_{b}(c), b \circ d) \\
& = & (\theta_{b}^{-1}(a^{-1}(a \circ \varphi_{b}(c)), b^{-1}(b \circ d)) \\
& = & (\theta_{b}^{-1}(\gamma_{a}(\varphi_{b}(c))), \gamma_{b}(d)). 
\end{eqnarray*}
We see quickly that $ \gamma_{(a,b)}(c,e) \in (A,e) $ and that $ \gamma_{(a,b)}(e,d) \in (e,B) $, which completes the proof. 
\end{proof}

Conversely, each skew brace that is the internal semidirect product of an ideal and a left ideal is isomorphic to a suitable external semidirect product as in Definition \ref{defn_external_SDP}. 

\begin{proposition}
Suppose that $ G = (G,\cdot,\circ) $ is a skew brace which is the internal semidirect product of an ideal $ A $ and a left ideal $ B $. Define 
\[ \varphi : (B,\circ) \rightarrow \Aut(A,\circ) \mbox{ by } \varphi_{b}(a) = b \circ a \circ \bar{b} \]
and
\[ \theta : (B,\cdot) \rightarrow \Aut(A,\cdot) \mbox{ by } \theta_{b}(a) = b \cdot a \cdot b^{-1}. \]
Then $ A \rtimes_{\theta}^{\varphi} B $ is a skew brace and the map $ \delta : G \rightarrow A \rtimes_{\theta}^{\varphi} B $ defined by $ \delta(a \circ b) = (a,b) $ is an isomorphism.
\end{proposition}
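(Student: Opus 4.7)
My plan is to prove this by showing that $\delta$ is a bijection which intertwines each of the two operations of $G$ with the corresponding operation on $A \times B$ in the statement; since $G$ is a skew brace, this simultaneously exhibits $A \rtimes_{\theta}^{\varphi} B$ as a skew brace (by transport of structure) and shows that $\delta$ is an isomorphism. The maps $\varphi$ and $\theta$ are well-defined homomorphisms into $\Aut(A,\circ)$ and $\Aut(A,\cdot)$ because $A$ is normal in both $(G,\circ)$ and $(G,\cdot)$, and $\delta$ is a bijection because the hypotheses give each element of $G$ a unique $\circ$-decomposition $a \circ b$ with $a \in A$ and $b \in B$.

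The crucial auxiliary fact is that $\gamma_{a}(b) = b$ for all $a \in A$ and $b \in B$, or equivalently $a \circ b = a \cdot b$ on this set. To prove this I would show that $\gamma_{a}(b) \cdot b^{-1}$ belongs to $A \cap B$. On the one hand, $\gamma_{a}(b) \in B$ since $B$ is a left ideal, and $B$ is closed under $\cdot$, so $\gamma_{a}(b) \cdot b^{-1} \in B$. On the other hand, setting $a'' = \bar{b} \circ a \circ b \in A$ (using $A \trianglelefteq (G,\circ)$) we have $a \circ b = b \circ a'' = b \cdot \gamma_{b}(a'')$ with $\gamma_{b}(a'') \in A$ by $\gamma$-invariance of $A$, so $(a \circ b) \cdot b^{-1} \in A$ by normality of $A$ in $(G,\cdot)$, and hence $\gamma_{a}(b) \cdot b^{-1} = a^{-1} \cdot (a \circ b) \cdot b^{-1} \in A$. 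Since $A \cap B = \{e\}$, we conclude $\gamma_{a}(b) = b$.

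With this fact in hand the two compatibility checks for $\delta$ are short. For the $\circ$-operation, normality of $A$ in $(G,\circ)$ gives
\[ (a \circ b) \circ (a' \circ b') = a \circ (b \circ a' \circ \bar{b}) \circ (b \circ b') = (a \circ \varphi_{b}(a')) \circ (b \circ b'), \]
which is already a $\circ$-decomposition, so $\delta$ sends this product to $(a \circ \varphi_{b}(a'), b \circ b')$. For the $\cdot$-operation, using $a \circ b = a \cdot b$ twice and then the auxiliary fact once more,
\[ (a \circ b) \cdot (a' \circ b') = (a \cdot b) \cdot (a' \cdot b') = (a \cdot \theta_{b}(a')) \cdot (b \cdot b') = (a \cdot \theta_{b}(a')) \circ (b \cdot b'), \]
and $\delta$ sends this product to $(a \cdot \theta_{b}(a'), b \cdot b')$. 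Both outputs match the operations in Definition \ref{defn_external_SDP}.

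The main obstacle is the auxiliary fact that $\gamma_{a}$ fixes $B$ pointwise; without it the $\cdot$-decomposition of $(a \circ b) \cdot (a' \circ b')$ involves $\gamma_{a}(b)$ rather than $b$, and the clean identification with $\theta_{b}$ breaks down. Once the auxiliary fact is established the remaining steps are essentially formal, and the skew brace axiom on $A \rtimes_{\theta}^{\varphi} B$ follows by transport of structure along the bijection $\delta$.
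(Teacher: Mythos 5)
Your proposal is correct and follows essentially the same route as the paper: both proofs hinge on establishing the auxiliary fact $\gamma_{a}(b)=b$ for $a\in A$, $b\in B$ by showing a suitable product lies in $A\cap B=\{e\}$ (you work with $\gamma_{a}(b)\cdot b^{-1}$ where the paper uses $b^{-1}\cdot\gamma_{a}(b)$, via the same rewriting $a\circ b=b\circ\varphi_{b}^{-1}(a)=b\cdot\gamma_{b}(\varphi_{b}^{-1}(a))$), and then transport the skew brace structure along the bijection $\delta$, checking that the transported operations agree with those of $A\rtimes_{\theta}^{\varphi}B$.
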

\begin{proof}
First we establish the useful fact that $ \gamma_{a}(b) = b $ for all $ a \in A $ and $ b \in B $. Consider the element $ b^{-1}\gamma_{a}(b) $. Since $ B $ is a left ideal of $ G $ we have $ \gamma_{a}(b) \in B $ and so $ b^{-1}\gamma_{a}(b) \in B $. On the other hand we have
\begin{eqnarray*}
b^{-1}\gamma_{a}(b) & = & b^{-1}a^{-1}(a \circ b) \\
& = & \theta_{b}^{-1}(a^{-1}) b^{-1} ( b \circ \varphi_{b}^{-1}(a)) \\
& = & \theta_{b}^{-1}(a^{-1}) \gamma_{b}(\varphi_{b}^{-1}(a)). 
\end{eqnarray*}
Using the assumption that $ A $ is an ideal of $ G $ we find that $ \theta_{b}^{-1}(a^{-1}) \in A $ and $ \varphi_{b}^{-1}(a) \in A $, then that $ \gamma_{b}(\varphi_{b}^{-1}(a)) \in A $, and finally that $ b^{-1} \gamma_{a}(b)  \in A $. Hence $ b^{-1} \gamma_{a}(b) \in A \cap B = \{ e \} $, and so $ \gamma_{a}(b)= b $. 

Now we use the standard facts that the binary operation defined on the Cartesian product $ A \times B $ by 
\[ (a,b)\circ (a',b') = (a \circ \varphi_{b}(a), b \circ b') \]
makes $ A \times B $ into a group and that the map $ \delta : (G,\circ) \rightarrow (A \times B,\circ) $ given by $ \delta(a \circ b) = (a,b) $ is an isomorphism of groups. Using this isomorphism we define a second binary operation on $ A \times B $ by 
\[ (a,b)\cdot (a',b') = \delta ( \delta^{-1}(a,b) \cdot \delta^{-1}(a',b') ); \] 
then $ (A \times B, \cdot, \circ) $ is a skew brace isomorphic to $ G $ via $ \delta $. Now we have
\begin{eqnarray*}
(a,b) \cdot (a',b') & = & \delta( (a \circ b) \cdot (a' \circ b') ) \\
& = & \delta( a\gamma_{a}(b)a'\gamma_{a'}(b')) \\
& = & \delta( a b a' b' ) \\
& = & \delta( a \theta_{b}(a') bb' ) \\
& = & \delta( a \theta_{b}(a') \gamma_{a\theta_{b}(a')}(bb') ) \\
& = & \delta (a \theta_{b}(a') \circ bb') \\
& = & (a \theta_{b}(a'),bb').
\end{eqnarray*}
Hence the operations we obtain on $ A \times B $ coincide with those definition of $ A \rtimes_{\theta}^{\varphi} B $, and so $ \delta $ is a skew brace isomorphism from $ G $ to  $ A \rtimes_{\theta}^{\varphi} B $.
\end{proof}

Finally, we classify skew braces of the form $ A \rtimes_{\theta}^{\varphi} B $. Recall that we say that a skew brace $ B $ \textit{acts on} a skew brace $ A $ to mean that there is a homomorphism $ (B,\circ) \rightarrow \Aut(A) $, and that the binary operations on $ A \rtimes_{\theta}^{\varphi} B $ are as in Definition \ref{defn_external_SDP}.

\begin{theorem} \label{thm_external_semidirect}
Let $ A=(A,\cdot,\circ) $ and $ B=(B,\cdot,\circ) $ be skew braces and let
\begin{eqnarray*}
\varphi : (B,\circ) \rightarrow \Aut(A,\circ) \\
\theta : (B,\cdot) \rightarrow \Aut(A,\cdot)
\end{eqnarray*}
be group homomorphisms. The following statements are equivalent:
\begin{enumerate}
\item \label{enum_external_sdp_1} $ A \rtimes_{\theta}^{\varphi} B $ is a skew brace;
\item \label{enum_external_sdp_2} The skew brace $ B $ acts on the skew brace $ A $ via $ \varphi $ and the following equations hold inside $ \Aut(A,\cdot) $: 
\begin{eqnarray}
\gamma_{a} \theta_{b} = \theta_{b} \gamma_{a} \mbox{ for all } a \in A \mbox{ and } b \in B; \label{eqn_external_sdp_1} \\
\varphi_{b} \theta_{b'} = \theta_{b \gamma_{b}(b') b^{-1}} \varphi_{b} \mbox{ for all } b, b' \in B. \label{eqn_external_sdp_2}
\end{eqnarray}
\end{enumerate}
\end{theorem}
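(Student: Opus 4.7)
Both $(A \rtimes_\theta^\varphi B, \circ)$ and $(A \rtimes_\theta^\varphi B, \cdot)$ are automatically groups under the homomorphism hypotheses on $\varphi$ and $\theta$---they are the standard external semidirect products of groups---so the only axiom to verify is the skew brace compatibility identity \eqref{eqn_skew_brace}. The whole theorem therefore reduces to analysing exactly when this identity holds coordinate-wise in $A \times B$.

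I would first expand both sides of
\[ (a,b) \circ \bigl[ (a',b') \cdot (a'',b'') \bigr] = \bigl[ (a,b) \circ (a',b') \bigr] \cdot (a,b)^{-1} \cdot \bigl[ (a,b) \circ (a'',b'') \bigr] \]
using Definition \ref{defn_external_SDP} and the formula $(a,b)^{-1} = (\theta_b^{-1}(a^{-1}), b^{-1})$ for the $\cdot$-inverse. The $B$-coordinates of the two sides are $b \circ b' b''$ and $(b \circ b') b^{-1} (b \circ b'')$, and these match because $B$ is already a skew brace. For the $A$-coordinate, the crucial observation is the identity $b \circ b' = c b$ with $c = b \gamma_b(b') b^{-1} \in B$; this lets one rewrite $\theta_{b \circ b'} = \theta_c \theta_b$ and cancel the $\theta_b$ against the $\theta_b^{-1}$ coming from the inverse. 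After this simplification, the right-hand side collapses to $(a \circ \varphi_b(a')) \cdot \theta_c(\gamma_a(\varphi_b(a'')))$, while the left-hand side is $a \circ \varphi_b(a' \theta_{b'}(a''))$, and the theorem becomes the question of when this \emph{master identity} holds for all choices of the five parameters $a,a',a'',b,b'$.

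For sufficiency, assuming each $\varphi_b$ is a skew brace automorphism of $A$ together with the two displayed equations, the skew brace identity in $A$ turns the left-hand side into $(a \circ \varphi_b(a')) \gamma_a(\varphi_b(\theta_{b'}(a'')))$; applying \eqref{eqn_external_sdp_2} replaces $\varphi_b \theta_{b'}$ by $\theta_c \varphi_b$, and then \eqref{eqn_external_sdp_1} lets $\gamma_a$ commute past $\theta_c$, giving exactly the right-hand side. For necessity, I would extract each of the three conditions by specialization: setting $b' = b'' = e$ forces $c = e$ and, via the identity in $A$, reduces the master identity to $\varphi_b(a' a'') = \varphi_b(a') \varphi_b(a'')$, so $\varphi_b \in \Aut(A, \cdot)$ and hence (combined with the standing $\varphi_b \in \Aut(A, \circ)$) $\varphi_b \in \Aut(A)$, i.e.\ $\varphi$ realises an action of the skew brace $B$ on $A$; setting $a = e$ kills $\gamma_a$ and produces precisely \eqref{eqn_external_sdp_2}; and setting $b = e$ forces $c = b'$ and produces \eqref{eqn_external_sdp_1}.

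The main obstacle I anticipate is purely bookkeeping: correctly computing the $\cdot$-inverse $(a,b)^{-1}$ (which blends $\theta$ and the $A$-inverse in a non-trivial way) and then juggling $\theta_{b \circ b'}$ through the identity $b \circ b' = c b$. Once this is done cleanly, the equivalence becomes transparent, with the role of each of the three conditions in (ii) neatly pinpointed by one of the three specializations above.
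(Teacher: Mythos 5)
Your proposal is correct and takes essentially the same route as the paper: expand both sides of the compatibility identity coordinate-wise, note that the $B$-coordinates agree because $B$ is a skew brace, and reduce the $A$-coordinate to the single condition $\gamma_{a}\varphi_{b}\theta_{b'}=\theta_{b\gamma_{b}(b')b^{-1}}\gamma_{a}\varphi_{b}$, from which the conditions in (ii) are extracted by the same specializations ($a=e$ and $b=e$). The only cosmetic difference is that the paper establishes $\varphi_{b}\in\Aut(A,\cdot)$ first by a separate computation with $(e,b)\circ(a,e)(a',e)$, whereas you obtain it as the $b'=b''=e$ specialization of your master identity.
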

\begin{proof}
First suppose that \ref{enum_external_sdp_1} holds. Then we have
\begin{eqnarray*}
(e,b) \circ (a,e)(a',e) & = & (e,b) \circ (aa',e) \\
& = & (\varphi_{b}(aa'),b), 
\end{eqnarray*}
but also
\begin{eqnarray*}
(e,b) \circ (a,e)(a',e) & = & [(e,b) \circ (a,e)] (e,b)^{-1} [(e,b) \circ (a',e)] \mbox{ by \eqref{eqn_skew_brace} } \\
& = & (\varphi_{b}(a),b)(e,b^{-1}) (\varphi_{b}(a'),b) \\
& = & (\varphi_{b}(a)\varphi_{b}(a'),b).
\end{eqnarray*}
Hence $ \varphi_{b}(aa') = \varphi_{b}(a)\varphi_{b}(a') $, so $ \varphi_{b} \in \Aut(A,\cdot) $, and so the skew brace $ B $ acts on the skew brace $ A $. 

To complete the proof we assume that $ B $ acts on $ A $ via $ \varphi $ and show that the skew brace relation is satisfied on $ A \rtimes_{\theta}^{\varphi} B $ if and only if \eqref{eqn_external_sdp_1} and \eqref{eqn_external_sdp_2} hold. Let $ a,c,c' \in A $ and $ b,d,d' \in B $. Then we have
\begin{align}
(a,b) \circ (c,d)(c',d') &= (a,b) \circ (c \theta_{d}(c'), dd') \nonumber \\
&= (a \circ \varphi_{b}(c\theta_{d}(c')), b \circ dd') \nonumber \\
&= (a \circ \varphi_{b}(c)\varphi_{b}(\theta_{d}(c')), b \circ dd') \tag{ since   $\varphi_{b} \in \Aut(A,\cdot)$}\nonumber \\
&= ((a \circ \varphi_{b}(c)) a^{-1} (a \circ (\varphi_{b}(\theta_{d}(c'))), b \circ dd') \nonumber \\
&= ((a \circ \varphi_{b}(c)) \gamma_{a}(\varphi_{b}(\theta_{d}(c'))), b \circ dd') \label{eqn_external_sdp_proof_1}
\end{align}
whereas
\begin{eqnarray}
[(a,b) \circ (c,d)](a,b)^{-1}[(a,b) \circ (c',d')] 
& = & (a \circ \varphi_{b}(c), b \circ d)(\theta_{b}^{-1}(a^{-1}),b^{-1})(a \circ \varphi_{b}(c'),b \circ d') \nonumber \\
& = & (a \circ \varphi_{b}(c), b \circ d)(\theta_{b}^{-1}(a^{-1})\theta_{b}^{-1}(a \circ \varphi_{b}(c')),b^{-1}(b \circ d')) \nonumber \\
& = & (a \circ \varphi_{b}(c), b \circ d)(\theta_{b}^{-1}(a^{-1}(a \circ \varphi_{b}(c')),b^{-1}(b \circ d')) \nonumber \\
& = & (a \circ \varphi_{b}(c), b \circ d)(\theta_{b}^{-1}(\gamma_{a}(\varphi_{b}(c')),b^{-1}(b \circ d')) \nonumber \\
& = & ((a \circ \varphi_{b}(c)) \theta_{b \circ d}\theta_{b}^{-1}(\gamma_{a}(\varphi_{b}(c')),(b \circ d)b^{-1}(b \circ d')) \nonumber \\
& = & ((a \circ \varphi_{b}(c)) \theta_{b}\theta_{\gamma_{b}(d)}\theta_{b}^{-1}(\gamma_{a}(\varphi_{b}(c')),b \circ dd') \label{eqn_external_sdp_proof_2}
\end{eqnarray}
Hence $ A \rtimes_{\theta}^{\varphi} B $ is a skew brace if and only if \eqref{eqn_external_sdp_proof_1} and \eqref{eqn_external_sdp_proof_2} agree, which occurs if and only if
\begin{equation} \label{eqn_external_sdp_proof_3}
\gamma_{a}\varphi_{b}\theta_{d} = \theta_{b\gamma_{b}(d)b^{-1}}\gamma_{a}\varphi_{b} \in \Aut(A) \mbox{ for all } a \in A, b \in B.
\end{equation}
If \eqref{eqn_external_sdp_proof_3} holds then choosing $ b=1 $ yields $ \gamma_{a} \theta_{d} = \theta_{d}\gamma_{a} $, and so \eqref{eqn_external_sdp_1} holds; similarly choosing $ a=1 $ yields $ \varphi_{b}\theta_{d} = \theta_{b\gamma_{b}(d)b^{-1}}\varphi_{b} $, so \eqref{eqn_external_sdp_2} holds. Conversely, if \eqref{eqn_external_sdp_1} and \eqref{eqn_external_sdp_2} hold then \eqref{eqn_external_sdp_proof_3} holds. 

This completes the proof that \ref{enum_external_sdp_1} and \ref{enum_external_sdp_2} are equivalent.
\end{proof}

\section{Constructing and classifying Hopf-Galois structures} \label{sec_examples}

In this section we study the applications of the results of Section \ref{sec_semidirect} to Hopf-Galois theory. By the results of \cite{ST23} summarised in Section \ref{sec_skew_braces} above, the Hopf-Galois stuctures on a Galois extension $ L/K $ with Galois group $ (G,\circ) $ correspond bijectively with skew braces $ (G,\cdot,\circ) $. 

\begin{theorem} \label{thm_examples_SB_HGS}
Let $ L/K $ be a Galois extension whose Galois group $ (G,\circ) $ is the internal semidirect product of a normal subgroup $ (A,\circ) $ and a subgroup $ (B,\circ) $. Then there is a bijection between
\begin{enumerate}
\item \label{enum_HGS_sdp_1} Skew braces $ G=(G,\cdot,\circ) $ that are the internal semidirect product of an ideal $ A $ and a left ideal $ B $;
\item \label{enum_HGS_sdp_2} Hopf-Galois structures on $ L/K $ that realise $ L^{A} $ via a normal Hopf subalgebra and $ L^{B} $ via a Hopf subalgebra. 
\end{enumerate}
\end{theorem}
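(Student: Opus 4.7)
The plan is to invoke the Stefanello--Trappeniers correspondence recalled in Section \ref{sec_skew_braces} as the underlying bijection between skew brace structures $(G,\cdot,\circ)$ on the given set $G$ (with the prescribed operation $\circ$) and Hopf-Galois structures on $L/K$, and then to verify that on the two sides of this underlying bijection the respective extra conditions match. Throughout, $A$ and $B$ denote the fixed subgroups of $(G,\circ)$ provided by the semidirect product hypothesis, so as subsets of $G$ they are pinned down; the only freedom is in the choice of the second operation $\cdot$.

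For (i) $\Rightarrow$ (ii), suppose $(G,\cdot,\circ)$ is the internal semidirect product of the ideal $A$ and the left ideal $B$. The dictionary recalled in Section \ref{sec_skew_braces} then supplies $H_A = L[A,\cdot]^{(G,\circ)}$ as a normal Hopf subalgebra of the associated $H$ (since an ideal is in particular a strong left ideal), and gives $L^{H_A} = L^{(A,\circ)} = L^A$; similarly $H_B = L[B,\cdot]^{(G,\circ)}$ is a Hopf subalgebra with $L^{H_B} = L^B$, so the realisation requirements in (ii) are met.

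For the converse, let $H$ be a Hopf-Galois structure realising $L^A$ via a normal Hopf subalgebra $N$ and $L^B$ via a Hopf subalgebra $H'$. The bijection between left ideals of the associated skew brace $(G,\cdot,\circ)$ and Hopf subalgebras of $H$, combined with the injectivity of the fixed-field map and the identity $L^{H_{A'}} = L^{(A',\circ)}$ valid on left ideals, forces $N = H_A$ and $H' = H_B$. Thus $A$ is a left ideal of $(G,\cdot,\circ)$, strong since $H_A$ is normal, and the hypothesis $(A,\circ)\trianglelefteq(G,\circ)$ promotes it to an ideal; $B$ is likewise a left ideal. The conditions $A\cap B=\{e\}$ and $A\circ B=G$ are inherited from the group-theoretic semidirect product hypothesis, while $A\cdot B=G$ follows from $(A,\cdot)\trianglelefteq(G,\cdot)$, $(B,\cdot)\le(G,\cdot)$ and the counting identity $|A\cdot B| = |A||B|/|A\cap B| = |G|$. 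The only step requiring genuine care is the translation itself, namely verifying that a subset is a (strong) left ideal precisely when the corresponding Hopf subalgebra is (normal and) realises the prescribed intermediate field; once one commits to the Stefanello--Trappeniers dictionary and the coincidence of the Galois and Hopf-Galois fixed fields on left ideals, the remainder is bookkeeping.
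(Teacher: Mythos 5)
Your proposal is correct and follows essentially the same route as the paper: both directions are read off from the Stefanello--Trappeniers dictionary between (strong) left ideals/ideals and (normal) Hopf subalgebras, together with the coincidence of Galois and Hopf-Galois fixed fields. Your converse direction simply spells out in more detail the identification of the given Hopf subalgebras with $H_A$ and $H_B$ and verifies $A\cdot B=G$ by counting rather than by citing the remark that two of the three semidirect-product conditions imply the third; these are harmless elaborations of the paper's terser argument.
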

\begin{proof}
By the discussion in Section \ref{sec_skew_braces}, if $ G $ is a skew brace as in \ref{enum_HGS_sdp_1} then the corresponding Hopf-Galois structure realises $ L^{A} $ via a normal Hopf algebra and realises $ L^{B} $ via a Hopf subalgebra.

Conversely, if we are given a Hopf-Galois structure as in \ref{enum_HGS_sdp_2} let $ G $ be the corresponding skew brace. Then $ A $ is an ideal of $ G $ and $ B $ is a left ideal of $ G $, and the assumption that $ (G,\circ) $ is the semidirect product of $ (A,\circ) $ and $ (B,\circ) $ as groups implies that $ A \circ B = G $ and $ A \cap B = \{ e \} $, so that $ G $ is the internal semidirect product of $ A $ and $ B $ as skew braces. 
\end{proof}

Since Theorem \ref{thm_external_semidirect} classifies the skew braces appearing in Theorem \ref{thm_examples_SB_HGS} part \ref{enum_HGS_sdp_1} we may use it to classify Hopf-Galois structures on $ L/K $ appearing in Theorem \ref{thm_examples_SB_HGS} part \ref{enum_HGS_sdp_2}, as follows: let $ \varphi : (B,\circ) \rightarrow \Aut(A,\circ) $ be defined by $ \varphi_{b}(a) = b \circ a \circ \bar{b} $ for all $ a \in A $ and $ b \in B $ (recall that $ \bar{b} $ denotes the inverse of $ b $ in $ (B,\circ) $). Given skew braces $ (A,\cdot,\circ) $ and $ (B,\cdot,\circ) $ such that $ B $ acts on $ A $ via $ \varphi $, we seek to construct, or classify, all group homomorphisms $ \theta : (B,\cdot) \rightarrow \Aut(A,\cdot) $ such that \eqref{eqn_external_sdp_1} and \eqref{eqn_external_sdp_2} are satisfied; we shall say that such $ \theta $ are \textit{admissible} for the given $ A,B, $ and $ \varphi $. For each admissible $ \theta $ we obtain a Hopf-Galois structure on $ L/K $ as in Theorem \ref{thm_examples_SB_HGS} part \ref{enum_HGS_sdp_2}. 

\begin{example} \label{eg_classical}
Let $ (A,\circ,\circ) $ and $ (B,\circ,\circ) $ be trivial skew braces. Since $ \varphi_{b} \in \Aut(A,\circ) $ for each $ b \in B $ the skew brace $ B $ acts on $ A $ via $ \varphi $. Let $ \theta = \varphi $. Since $ A $ is a trivial skew brace we have $ \gamma_{a} = \mathrm{id} $ for all $ a \in A $, and so \eqref{eqn_external_sdp_1} is satisfied. Similarly, since $ B $ is a trivial skew brace we have
\[ \theta_{b\gamma_{b}(b')b^{-1}} \varphi_{b} = \varphi_{bb'b^{-1}} \varphi_{b} = \varphi_{b \circ b' \circ \bar{b} \circ b} = \varphi_{b \circ b'} = \varphi_{b} \theta_{b'} \]
for all $ b,b' \in B $; therefore \eqref{eqn_external_sdp_2} holds, and so $ \theta = \varphi $ is admissible. The skew brace we obtain in this way is the trivial skew brace on $ (G,\circ) $, which corresponds to the classical Hopf-Galois structure on $ L/K $. 
\end{example}

\begin{example}
Suppose that $ (A,\circ) = \langle a \rangle \cong C_{8} $ and $ (B,\circ) = \langle b \rangle \cong C_{4} $. Throughout this example we denote powers of $ a $ (respectively, of $ b $) with respect to $ \circ $ by $ a^{[i]} $ (respectively, $ b^{[i]} $. Suppose that $ (G,\circ) \cong (A,\circ) \rtimes_{\varphi} (B,\circ) $, where $ \varphi : (B,\circ) \rightarrow \Aut(A,\circ) $ is defined by $ \varphi_{b}(a) = a^{[-1]} $.

Define a binary operation $ \cdot $ on $ A $ by $ a^{[i]} \cdot a^{[j]} = a^{[i+(-1)^{i}j]} $; then $ (A,\cdot) \cong D_{4} $ (the element $ r=a^{[2]} $ has order $ 4 $, the element $ s=a $ has order $ 2 $, and $ s \cdot r \cdot s \cdot r = e $) and $ (A,\cdot,\circ) $ is a skew brace. Similarly, define a binary operation $ \cdot $ on $ B $ by $ b^{[i]} \cdot b^{[j]} = b^{[i+j+2ij]} $; then $ (B,\cdot) \cong C_{2} \times C_{2} $ and $ (B,\cdot,\circ) $ is a skew brace. 

We can verify that the skew brace $ B $ acts on the skew brace $ A $ via $ \varphi $: note that $ \varphi_{b}(a^{[i]})=a^{[-i]} $, so
\[ \varphi_{b}(a^{[i]} \cdot a^{[j]}) = \varphi_{b}([a^{i+(-1)^{i}j}]) = a^{[-(i+(-1)^{i}j)]} = a^{[-i]} \cdot a^{[-j]} = \varphi_{b}(a^{[i]}) \cdot \varphi_{b}(a^{[j]}). \]

Now we exhibit an admissible homomorphism $ \theta : (B,\cdot) \rightarrow \Aut(A,\cdot) $. Let $ \iota $ denote the inner automorphism of $ (A,\cdot) $ corresponding to $ r=a^{[2]} $, and let
\[ \theta_{e}=\theta_{b^{[2]}} = \mathrm{Id}, \quad \theta_{b}=\theta_{b^{[3]}} = \iota. \]
Noting that $ (B,\cdot) \cong C_{2} \times C_{2} $, we see that $ \theta $ is a homomorphism. A routine verification shows that $ \iota $ is central in $ \Aut(A,\cdot) $, so \eqref{eqn_external_sdp_1} holds, and \eqref{eqn_external_sdp_2} holds if and only if $ \theta_{\gamma_{b^{[i]}}(b^{[j]})} = \theta_{b^{[j]}} $ for all $ i $ and $ j $. We may verify that $ \gamma $-function of $ B $ is given by $ \gamma_{b^{[i]}}(b^{[j]}) = b^{[(2i+1)j]} $; comparing this with the definition of $ \theta $ we see that \eqref{eqn_external_sdp_2} is indeed satisfied, and so $ \theta $ is an admissible homomorphism. 

Thus we obtain a Hopf-Galois structure of type $ D_{4} \rtimes_{\theta} (C_{2} \times C_{2}) $ on a Galois extension with Galois group $ C_{8} \rtimes_{\varphi} C_{4} $. 
\end{example}

\begin{example} \label{eg_cyclic_abelian_all_admissible}
Suppose that $ (A,\circ) $ is cyclic group and that $ (B,\circ) $ is an abelian group, and let $ (A,\circ,\circ) $ and $ (B,\circ,\circ) $ be trivial braces. Then $ B $ acts on $ A $ via $ \varphi $, as above. Let $ \theta : (B,\circ) \rightarrow \Aut(A,\circ) $ be any group homomorphism. The fact that $ A $ is trivial implies that $ \gamma_{a} = \mathrm{id} $ for each $ a \in A $, so \eqref{eqn_external_sdp_1} is satisfied. Similarly, the fact that $ B $ is trivial and $ (B,\circ) $ is abelian implies that $ b\gamma_{b}(b')b^{-1} = b \circ b' \circ \bar{b} = b' $ for all $ b,b' \in B $; combined with the fact that $ \Aut(A,\circ) $ is abelian, this shows that \eqref{eqn_external_sdp_2} holds. Therefore all $ \theta $ are admissible in this case. 
\end{example}

\begin{example} \label{eg_powers_of_admissible}
Suppose that $ B $ acts on $ A $ via $ \varphi $, and that $ \theta $ is an admissible homomorphism. For $ i \in \mathbb{N} $ define $ \theta^{(i)} : B \rightarrow \Aut(A,\cdot) $ by $ \theta^{(i)}_{b}(a) = \theta_{b}^{i}(a) $. Then $ \theta^{(i)} $ is admissible: \eqref{eqn_external_sdp_1} is immediate, and since each $ \gamma_{b} \in \Aut(B,\cdot) $ we have
\[ \theta_{b\gamma_{b}(b')b^{-1}}^{i} = \theta_{(b\gamma_{b}(b')b^{-1})^{i}} = \theta_{b\gamma_{b}((b')^{i})b^{-1}} \mbox{ for all } b,b' \in B, \] 
from which \eqref{eqn_external_sdp_2} follows.
\end{example}

By varying the choice of complement $ (B,\circ) $ to $ (A,\circ) $ in $ (G,\circ) $ we may be able to construct numerous Hopf-Galois structures on $ L/K $ that realise $ L^{A} $ via a normal Hopf subalgebra. The \textit{classification} of such Hopf-Galois structures remains delicate, however: a given Hopf-Galois structure may realise intermediate fields corresponding to multiple such complements, and so appear multiple times in our description; on the other hand, a Hopf-Galois structure may realise $ L^{A} $ without realising $ L^{B'} $ for any complement $ (B',\circ) $ to $ (A,\circ) $, and so not be covered by our results. 

\begin{example}
The trivial skew brace $ (G,\circ,\circ) $ corresponds to the classical Hopf-Galois structure; this realises $ L^{A} $ via a normal Hopf subalgebra and realises $ L^{B'} $ for every complement $ (B',\circ) $ to $ (A,\circ) $. On the other hand, the left ideal in the almost trivial skew brace $ (G,\circ^{op},\circ) $ are precisely the normal subgroups of $ (G,\circ) $, so (assuming that $ (G,\circ) $ is nonabelian) the corresponding Hopf-Galois structure realises $ L^{A} $ via a normal Hopf subalgebra but does not realise $ L^{B'} $ for for any non-normal complement $ (B',\circ) $ to $ (A,\circ) $.
\end{example}

More generally, we have

\begin{lemma} \label{lem_opposites}
Let $ G=(G,\cdot,\circ) $ be a skew brace and $ G^{op}=(G,\cdot^{op},\circ) $ be the opposite skew brace. A subgroup $ (H,\circ) $ of $ (G,\circ) $ is a left ideal of $ G^{op} $ if and only if $ \gamma_{g}(H)=g^{-1}\cdot H \cdot g $ for all $ g \in G $ (where $ \gamma $ denotes the $ \gamma $-function of $ G $). 
\end{lemma}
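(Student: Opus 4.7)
The plan is to prove the equivalence by directly computing the $\gamma$-function of the opposite skew brace $G^{op}$ in terms of that of $G$, and then unwinding what the left ideal condition for $G^{op}$ says concretely.

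First I would observe that since $(G,\cdot^{op})$ and $(G,\cdot)$ satisfy essentially the same group equations (just with the factors reversed), the inverse of any $g$ is the same element in both groups, namely $g^{-1}$. Writing $\gamma^{op}$ for the $\gamma$-function of $G^{op}$, this lets me expand
\[
\gamma^{op}_{g}(y) \;=\; g^{-1} \cdot^{op} (g \circ y) \;=\; (g \circ y) \cdot g^{-1}.
\]
Next, I would substitute the identity $g \circ y = g \cdot \gamma_{g}(y)$, which follows immediately from the definition of $\gamma_{g}$ in $G$, to obtain
\[
\gamma^{op}_{g}(y) \;=\; g \cdot \gamma_{g}(y) \cdot g^{-1}.
\]
Applying this elementwise gives $\gamma^{op}_{g}(H) = g \cdot \gamma_{g}(H) \cdot g^{-1}$ for any subset $H$.

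Finally, I would invoke the definition of left ideal: since $(H,\circ)$ is already assumed to be a $\circ$-subgroup of $(G,\circ)$, the left ideal condition for $G^{op}$ reduces to $\gamma^{op}_{g}(H) = H$ for all $g \in G$. By the displayed formula, this is equivalent to $g \cdot \gamma_{g}(H) \cdot g^{-1} = H$ for all $g$, or after conjugating, to $\gamma_{g}(H) = g^{-1} \cdot H \cdot g$ for all $g \in G$, which is exactly the claimed condition.

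The argument is essentially a bookkeeping exercise, and the only real obstacle is verifying cleanly that the $\cdot$-inverse and the $\cdot^{op}$-inverse of $g$ coincide so that the translation of $\gamma^{op}$ into $\gamma$ carries no hidden sign/order errors; once that is in hand, the two sides of the equivalence are literally the same equation rewritten.
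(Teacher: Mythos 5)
Your proof is correct and follows essentially the same route as the paper: both compute $\gamma^{op}_{g}(h) = (g \circ h)\cdot g^{-1} = g \cdot \gamma_{g}(h) \cdot g^{-1}$ and then translate the condition $\gamma^{op}_{g}(H)=H$ into $\gamma_{g}(H)=g^{-1}\cdot H\cdot g$. Your extra remark that the $\cdot$- and $\cdot^{op}$-inverses coincide is a sensible bit of bookkeeping that the paper leaves implicit.
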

\begin{proof}
The $ \gamma $-function of $ G^{op} $ is given by
\[ \gamma^{op}_{g}(h) = g^{-1} \cdot^{op} (g \circ h) = (g \circ h) \cdot g^{-1} = g \cdot \gamma_{g}(h) \cdot g^{-1} \]
for all $ g,h \in G $. Thus $ \gamma^{op}_{g}(H)=H $ for all $ g \in G $ if and only if $ \gamma_{g}(H)=g^{-1}\cdot H \cdot g $ for all $ g \in G $.
\end{proof}

We conclude this section by studying Galois extensions $ L/K $ of degree $ pq $, where $ p>q $ are prime numbers. The Hopf-Galois structures on these extensions are classified by Byott \cite{By04c}. We shall show that Theorem \ref{thm_external_semidirect}, together with the notion of opposite Hopf-Galois structures, are sufficient to describe all of these Hopf-Galois structures, although we still rely on the enumeration given in \cite{By04c}.

\begin{example}
Let $ p>q $ be prime numbers and let $ L/K $ be a Galois extension of degree $ pq $ with Galois group $ (G,\circ) $. If $ p \not \equiv 1 \pmod{q} $ then $ L/K $ is necessarily a cyclic extension and admits only the classical Hopf-Galois structure \cite[Theorem 1]{By96}, which we can describe via our methods (see Example \ref{eg_classical}). We shall therefore assume that $ p \equiv 1 \pmod{q} $. 

If $ G=(G,\cdot,\circ) $ is a skew brace then $ (G,\cdot) $ has a unique Sylow subgroup $ A $ of order $ p $; by uniqueness we see that $ A $ is a left ideal of $ G $, so $ (A,\circ) $ is also the unique Sylow $ p $ subgroup of $ (G,\circ) $, and so in fact $ A $ is an ideal of $ G $. Since $ A $ has order $ p $, and $ G/A $ has order $ q $, any left ideal of $ G $ that is a complement to $ A $ is necessarily trivial as a skew brace. 

First suppose that $ (G,\circ) $ is cyclic, and let $ (B,\circ) $ be the unique Sylow $ q $-subgroup of $ (G,\circ) $; then $ (G,\circ) \cong (A,\circ) \times (B,\circ) $. There are $ q $ homomorphisms $ \theta: (B,\circ) \rightarrow \Aut(A,\circ) \cong \Z_{p}^{\times} $, and by \ref{eg_cyclic_abelian_all_admissible} they are all admissible. Choosing $ \theta $ to be the trivial homomorphism yields the trivial skew brace, which corresponds to the classical Hopf-Galois structure on $ L/K $. Each of the other $ q-1 $ choices for $ \theta $ yields a distinct skew brace $ (G,\cdot,\circ) $ in which $ (G,\cdot) $ is metacylic, and which is the internal semidirect product of the ideal $ A $ and the left ideal $ B $. Since $ (B,\cdot) $ is not normal in $ (G,\cdot) $, Lemma \ref{lem_opposites} implies that $ B $ is not a left ideal of $ G^{op} $; thus we obtain a further $ q-1 $ skew braces $ (G,\cdot,\circ) $ in which the ideal $ A $ does not have a complement that is a left ideal. Thus in total we obtain $ 2(q-1) $ Hopf-Galois structures of metacyclic type on $ L/K $, along with the classical structure, which has cyclic type. By \cite[Theorem 6.1]{By04c} these are all the Hopf-Galois structures on $ L/K $. 

Now suppose that $ (G,\circ) $ is metacyclic, and let $ (B,\circ) $ be one of the $ p $ Sylow $ q $-subgroups of $ (G,\circ) $; then $ (G,\circ) \cong (A,\circ) \rtimes_{\varphi} (B,\circ) $ where $ \varphi_{b}(a) = b \circ a \circ \bar{b} $. As above, each of the $ q $ homomorphisms $ \theta: (B,\circ) \rightarrow \Aut(A,\circ) $ is admissible. 

Choosing $ \theta $ to be the trivial homomorphism yields a skew brace $ (G,\cdot,\circ) $ in which $ (G,\cdot) $ is cyclic, and which is the internal semidirect product of the ideal $ A $ and the left ideal $ B $. Since $ (B,\cdot)$ is the unique Sylow $ q $-subgroup of $ (G,\cdot) $, this skew brace does not have any other left ideals of order $ q $. Thus the $ p $ choices of $ (B,\circ) $ yield $ p $ different skew braces, corresponding to $ p $ different Hopf-Galois structures on $ L/K $. By \cite[Theorem 6.2]{By04c} these are all the Hopf-Galois structures of cyclic type on $ L/K $. 

Each of the nontrivial choices of $ \theta $ has the form $ \varphi^{(i)} $ for some $ i=1, \ldots ,q-1 $. Choosing $ i=1 $ yields the trivial skew brace regardless of the choice of $ (B,\circ) $, which corresponds to the classical Hopf-Galois structure; since $ (G,\circ) $ is nonabelian the opposite of the trivial skew brace (the almost trivial skew brace) corresponds to a different Hopf-Galois structure on $ L/K $. 

Each of the remaining $ q-2 $ choices for $ i $ yields a distinct skew brace $ (G,\cdot,\circ) $ in which $ (G,\cdot) $ is metacylic, and which is the internal semidirect product of the ideal $ A $ and the left ideal $ B $. In fact $ B $ is the only left ideal of $ G $ of order $ q $: the other subgroups of $ (G,\cdot) $ of order $ q $ has the form $ \langle a^{k}b \rangle $ for some $ k = 1, \ldots ,p-1 $, and we have 
\begin{eqnarray*}
\gamma_{b}(a^{k}b) & = & \gamma_{b}(a^{k}) \gamma_{b}(b) \\
& = & b^{-1} (b \circ a^{k}) b \\
& = & b^{-1}(\varphi_{b}(a^{k}) \circ b)b \\
& = & (b^{-1}\varphi_{b}(a^{k})b)b \\
& = & \varphi_{b^{1-i}}(a^{k})b \\
& \not \in & \langle a^{k}b \rangle,
\end{eqnarray*}
so none of these is a left ideal of $ G $. Hence we obtain $ q-2 $ different Hopf-Galois structures on $ L/K $. Since $ (B,\cdot) $ is not normal in $ (G,\cdot) $, Lemma \ref{lem_opposites} implies that $ B $ is not a left ideal of $ G^{op} $; thus we obtain a further $ q-2 $ skew braces $ (G,\cdot,\circ) $ in which the ideal $ A $ does not have a complement that is a left ideal. 
 
Allowing $ (B,\circ) $ to range over the $ p $ Sylow $ q $-subgroups of $ (G,\circ) $, we obtain $ 2p(q-2) $ distinct skew braces corresponding to Hopf-Galois structures on $ L/K $; together with the classical structure and its opposite we obtain $ 2 + 2p(q-2) $ Hopf-Galois structures, which by \cite[Theorem 6.2]{By04c} is all the Hopf-Galois structures of metacyclic type on $ L/K $.  
\end{example}

\section{Induced Hopf-Galois structures revisited} \label{sec_induced_HGS}

If $ A=(A,\cdot,\circ) $ and $ B=(B,\cdot,\circ) $ are skew braces and $ B $ acts on $ A $ via $ \varphi : (B,\circ) \rightarrow \Aut(A) $ then we may certainly choose $ \theta $ to be trivial in Theorem \ref{thm_external_semidirect}, giving a skew brace $ A \rtimes_{id}^{\varphi} B $ (see also \cite[Corollary 2.37]{SV18}). 

Now if $ L/K $ is a Galois extension of fields whose Galois group $ (G,\circ) $ is isomorphic to $ (A,\circ) \rtimes_{\varphi} (B,\circ) $ then $ (A,\cdot,\circ) $ corresponds to a Hopf-Galois structure of type $ (A,\cdot) $ on $ L/L^{A} $ and $ (B,\cdot,\circ) $ corresponds to a Hopf-Galois structure of type $ (B,\cdot) $ on $ L/L^{B} $; assuming that $ B $ acts on $ A $ via $ \varphi $, the skew brace $ A \rtimes_{id}^{\varphi} B $ corresponds to a Hopf-Galois structure of type $ (A,\cdot) \times (B,\cdot) $ on $ L/K $. In this section we show that this construction is equivalent to the notion of induced Hopf-Galois structures due to Crespo, Rio, and Vela \cite{CRV16a}. 

The results of \cite{CRV16a} are expressed in terms of the Greither-Pareigis classification of Hopf-Galois structure on finite separable extensions \cite{GP87}. For $ L/K $ a Galois extension with Galois group $ (G,\circ) \cong (A,\circ) \rtimes_{\varphi} (B,\circ) $, this classification implies that the Hopf-Galois structures on $ L/K $ correspond bijectively with regular subgroups of $ \Perm(G) $ that are normalised by the image of $ G $ under the left regular representation $ \lambda_{\circ}: (G,\circ) \rightarrow \Perm(G) $, and that analogous statements hold for the Galois extensions $ L/L^{A} $ and $ L/B^{B} $. It also implies that the Hopf-Galois structures on the (potentially non-normal) extension $ L^{B}/K $ correspond with regular subgroups of $ \Perm(X) $, where $ X=G/B $ is the left coset space of $ B $ in $ G $, that are normalised by the image of $ G $ under the left translation map $ \lambda_{X}: (G,\circ) \rightarrow \Perm(G/B) $. As with Hopf-Galois structures arising from skew braces, we refer to the isomorphism class of a regular subgroup as the \textit{type} of the corresponding Hopf-Galois structure.  

The classifications of Hopf-Galois structures on Galois extensions via skew braces and via regular subgroups are related as follows: if $ (G,\cdot,\circ) $ is a skew brace then $ \rho_{\cdot}(G) $ (the image of $ G $ under the right regular representation with respect to $ \cdot $) is a regular subgroup of $ \Perm(G) $ normalised by $ \lambda_{\circ}(G) $; conversely, given such a subgroup $ N $ the map $ \eta \mapsto \eta^{-1}[e_{G}] $ is a bijection, which we can use to transport the structure of $ N $ to give a second binary operation $ \cdot $ on $ G $ such that $ (G,\cdot,\circ) $ is a skew brace and $ N = \rho_{\cdot}(G) $. 

Now we summarise the theory of induced Hopf-Galois structures, essentially following \cite{CRV16a} and \cite[Section 8.3]{HAGMT}. Let $ L/K $ be a Galois extension of fields whose Galois group $ (G,\circ) $ is isomorphic to $ (A,\circ) \rtimes_{\varphi} (B,\circ) $. Suppose we are given a Hopf-Galois structure on the (potentially non-normal) extension $ L^{B}/K $, with corresponding regular subgroup $ M \leq \Perm(X) $, and a Hopf-Galois structure on the extension $ L/L^{B} $, with corresponding regular subgroup $ N \leq \Perm(B) $.

\[
\xymatrixcolsep{4pc} 
\xymatrixrowsep{5pc}
\xymatrix{
& \ar@{-}[dl]_{\psi(M)}^{}="top" L \ar@{-}[dr]^{N} & \\
 L^{(A,\circ)} \ar@{-}[dr] & & \ar@{-}[dl]^{M}_{}="bottom" L^{(B,\circ)} \\
& K & 
\ar@{-->}"bottom";"top"
} 
\]

The fact that the elements of $ A $ form a system of coset representatives for $ B $ in $ G $ implies that the map $ \psi : \Perm(X) \rightarrow \Perm(A) $ defined by 
\begin{equation} \label{eqn_defn_of_psi}
\psi(\mu)[a] \circ B = \mu[a \circ B] \mbox{ for all } a \in A 
\end{equation}
is an isomorphism of groups. It follows that $ \psi(M) $ is a regular subgroup of $ \Perm(A) $ and that the map $ \nu : M \times N \rightarrow \Perm(G) $ defined by
\begin{equation} \label{eqn_induced_HGS}
\nu(\mu,\eta)[a \circ b] = \psi(\mu)[a] \circ \eta[b]
\end{equation}
is an embedding whose image is a regular subgroup of $ \Perm(G) $. The normalisation assumptions on $ M $ and $ N $, together with the assumption that $ A $ is normal in $ G $, imply that $ \mathrm{Im}(\nu) $ is normalised by $ \lambda_{\circ}(G) $ and therefore corresponds to a Hopf-Galois structure of type $ M \times N $ on $ L/K $, which is said to be \textit{induced} from the given structures on $ L^{B}/K $ and $ L/L^{B} $. 

We begin the process of connecting this approach with ours by examining the connection between the normalisation properties of $ M $ and $ \psi(M) $ in a little more detail. 

\begin{proposition} \label{prop_induced_X_to_A}
Let $ M $ be a regular subgroup of $ \Perm(X) $. Then $ M $ is normalised by $ \lambda_{X}(G) $ if and only if the regular subgroup $ \psi(M) $ of $ \Perm(A) $ is normalised by $ \lambda_{\circ}(A) $ and $ \varphi_{b} \psi(M) \varphi_{b}^{-1} = \psi(M) $ for all $ b \in B $. 
\end{proposition}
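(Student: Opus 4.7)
\medskip

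The plan is to exploit the fact that $ \psi $ is an isomorphism of groups together with the decomposition $ G = A \circ B $, so that normalisation of $ M $ by $ \lambda_{X}(G) $ can be tested on a pair of generating sets corresponding to $ A $ and $ B $ separately.

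First I would identify the images under $ \psi $ of the two distinguished families of elements of $ \lambda_{X}(G) $. For $ a' \in A $, the definition \eqref{eqn_defn_of_psi} and the fact that $ A $ is a system of coset representatives for $ B $ give $ \psi(\lambda_{X}(a'))[a] \circ B = a' \circ a \circ B $, whence $ \psi(\lambda_{X}(a')) = \lambda_{\circ}(a') $ as elements of $ \Perm(A) $. For $ b \in B $, I would use that $ b \circ a = \varphi_{b}(a) \circ b $, so $ b \circ a \circ B = \varphi_{b}(a) \circ B $, and hence $ \psi(\lambda_{X}(b)) = \varphi_{b} $.

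Next, since $ G = A \circ B $, the subgroup $ \lambda_{X}(G) $ of $ \Perm(X) $ is generated by $ \lambda_{X}(A) \cup \lambda_{X}(B) $, and therefore $ \psi(\lambda_{X}(G)) $ is generated by $ \lambda_{\circ}(A) \cup \{\varphi_{b} \mid b \in B\} $ inside $ \Perm(A) $. Applying the isomorphism $ \psi $, the condition that $ M $ be normalised by $ \lambda_{X}(G) $ is equivalent to $ \psi(M) $ being normalised by $ \psi(\lambda_{X}(G)) $, which in turn is equivalent to $ \psi(M) $ being stable under conjugation by each generator. By the previous paragraph these conditions are precisely that $ \lambda_{\circ}(a) \psi(M) \lambda_{\circ}(a)^{-1} = \psi(M) $ for all $ a \in A $ and $ \varphi_{b} \psi(M) \varphi_{b}^{-1} = \psi(M) $ for all $ b \in B $, which gives the claim.

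The only genuinely delicate step is the computation $ \psi(\lambda_{X}(b)) = \varphi_{b} $: it relies on rewriting $ b \circ a $ using the semidirect product structure and on the choice of $ A $ as coset representatives, so the formula $ \varphi_{b}(a) = b \circ a \circ \bar{b} $ must be used to bring $ b \circ a $ back into the form $ (\text{element of }A) \circ b $. Once this identification is in place the rest is formal, as the proposition reduces to the evident statement that conjugation by a product of generators is determined by conjugation by each generator.
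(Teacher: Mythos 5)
Your proof is correct and takes essentially the same route as the paper's: both reduce normalisation by $\lambda_{X}(G)$ to normalisation by $\lambda_{X}(A)$ and $\lambda_{X}(B)$ separately (using $G = A\circ B$) and then transport conjugation through the isomorphism $\psi$, with the crux in each case being the identity $b\circ a\circ B = \varphi_{b}(a)\circ B$. Your identifications $\psi(\lambda_{X}(a'))=\lambda_{\circ}(a')$ and $\psi(\lambda_{X}(b))=\varphi_{b}$, combined with multiplicativity of $\psi$, are just a slightly more structural packaging of the pointwise conjugation computations carried out in the paper.
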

\begin{proof}
Since $ (G,\circ) \cong (A,\circ) \rtimes_{\varphi} (B,\circ) $ the subgroup $ M $ is normalised by $ \lambda_{X}(G) $ if and only if it is normalised by $ \lambda_{X}(A) $ and $ \lambda_{X}(B) $. Recall that the inverse of an element $ g \in (G,\circ) $ is denoted by $ \bar{g} $. 

For $ \mu \in M $, $ a \in A $, and $ a' \circ B \in X $ (with $ a' \in A $) we calculate
\begin{align*}
\lambda_{X}(a) \mu \lambda_{X}(\bar{a})[ a' \circ B]
& = a \circ \mu[\bar{a} \circ a' \circ B] \\
& = (a \circ \psi(\mu)[\bar{a} \circ a']) \circ B \tag{ by the definition of $ \psi $} \\
& = (\lambda_{\circ}(a) \psi(\mu) \lambda_{\circ}(\bar{a})[a']) \circ B. 
\end{align*}
Therefore $ M $ is normalised by $ \lambda_{X}(A) $ if and only if $ \psi(M) $ is normalised by $ \lambda_{\circ}(A) $.

Similarly, for $ \mu \in M $, $ b \in B $, and $ a' \circ B \in X $ (with $ a' \in A $) we calculate 
\begin{eqnarray*}
\lambda_{X}(b) \mu \lambda_{X}(\bar{b})[ a' \circ B]
& = & b \circ \mu[\bar{b} \circ a' \circ B] \\
& = & b \circ \mu[\varphi_{\bar{b}}(a') \circ \bar{b} \circ B] \\
& = & b \circ \mu[\varphi_{b}^{-1}(a') \circ B] \\
& = & (b \circ \psi(\mu)\varphi_{b}^{-1}[a']) \circ B \\
& = & (b \circ \psi(\mu)\varphi_{b}^{-1}[a'] \circ \bar{b}) \circ B \\
& = & (\varphi_{b}\psi(\mu)\varphi_{b}^{-1}[a']) \circ B.
\end{eqnarray*}
Therefore $ M $ is normalised by $ \lambda_{X}(B) $ if and only if $ \varphi_{b} \psi(M) \varphi_{b}^{-1} = \psi(M) $ for all $ b \in B $. 
\end{proof}

Since $ \psi $ is an isomorphism that preserves regularity, Proposition \ref{prop_induced_X_to_A} also implies that if $ M' $ is a regular subgroup of $ \Perm(A) $ that is normalised by $ \lambda_{\circ}(A) $ and satisfies $ \varphi_{b} M' \varphi_{b}^{-1} = M' $ for all $ b \in B $ then $ \psi^{-1}(M') $ is a regular subgroup of $ \Perm(X) $ normalised by $ \lambda_{X}(G) $.

We have seen that regular subgroups $ M' $ of $ \Perm(A) $ normalised by $ \lambda_{\circ}(A) $ are precisely the subgroups of the form $ M'=\rho_{\cdot}(A) $, where $ \cdot $ is a binary operation on $ A $ such that $ (A,\cdot,\circ) $ is a skew brace. We can reinterpret the condition $ \varphi_{b} M' \varphi_{b}^{-1} = M' $ in terms of the corresponding binary operation on $ A $. 

\begin{proposition} \label{prop_induced_B_acts_on_A}
Let $ (A,\cdot,\circ) $ be a skew brace. Then we have $ \varphi_{b} \rho_{\cdot}(A) \varphi_{b}^{-1} = \rho_{\cdot}(A) $ for all $ b \in B $ if and only if $ \varphi_{b} \in \Aut(A,\cdot) $ for all $ b \in B $. 
\end{proposition}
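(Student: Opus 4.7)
The plan is to prove the two directions separately by direct computation, exploiting the fact that $\varphi_{b}$, being a group automorphism of $(A,\circ)$, fixes the common identity $e$.

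For the easier implication, I would assume $\varphi_{b} \in \Aut(A,\cdot)$ and compute the conjugation explicitly. For any $a \in A$ and any $x \in A$, unwinding definitions gives
\[ \varphi_{b} \rho_{\cdot}(a) \varphi_{b}^{-1}(x) = \varphi_{b}(\varphi_{b}^{-1}(x) \cdot a) = \varphi_{b}(\varphi_{b}^{-1}(x)) \cdot \varphi_{b}(a) = x \cdot \varphi_{b}(a), \]
so $\varphi_{b} \rho_{\cdot}(a) \varphi_{b}^{-1} = \rho_{\cdot}(\varphi_{b}(a)) \in \rho_{\cdot}(A)$, and since $\varphi_{b}$ permutes $A$ we obtain equality of sets.

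For the converse, the key observation is that $\varphi_{b} \in \Aut(A,\circ)$ maps the identity of $(A,\circ)$ to itself, and this identity coincides with the identity of $(A,\cdot)$. Assuming $\varphi_{b} \rho_{\cdot}(A) \varphi_{b}^{-1} = \rho_{\cdot}(A)$, for each $a \in A$ there exists $a' \in A$ such that $\varphi_{b} \rho_{\cdot}(a) \varphi_{b}^{-1} = \rho_{\cdot}(a')$. Evaluating both sides at $e$ and using $\varphi_{b}(e)=e$ and $\varphi_{b}^{-1}(e)=e$ yields $\varphi_{b}(a) = a'$. Hence $\varphi_{b} \rho_{\cdot}(a) \varphi_{b}^{-1} = \rho_{\cdot}(\varphi_{b}(a))$; evaluating this identity at an arbitrary $\varphi_{b}(y)$ in place of $x$ and simplifying gives
\[ \varphi_{b}(y \cdot a) = \varphi_{b}(y) \cdot \varphi_{b}(a) \mbox{ for all } y,a \in A, \]
so $\varphi_{b}$ respects $\cdot$, and being a bijection it belongs to $\Aut(A,\cdot)$.

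There is no real obstacle here; the statement is essentially a manifestation of the well-known fact that the normaliser of $\rho_{\cdot}(A)$ in $\Perm(A)$ is the holomorph $\rho_{\cdot}(A) \rtimes \Aut(A,\cdot)$, together with the observation that any normalising permutation which fixes $e$ must lie in the $\Aut(A,\cdot)$ factor. The only subtlety worth flagging is making clear that $e$ is a common identity for $\cdot$ and $\circ$, which justifies the step $\varphi_{b}(e)=e$.
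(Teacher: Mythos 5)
Your proof is correct and takes essentially the same route as the paper's: conjugate $\rho_{\cdot}(a)$ by $\varphi_{b}$ explicitly, and for the converse evaluate at the common identity $e$ (using $\varphi_{b}(e)=e$) to identify the conjugating element as $\rho_{\cdot}(\varphi_{b}(a))$. The only cosmetic differences are that the paper uses the convention $\rho_{\cdot}(a)[a'] = a' \cdot a^{-1}$ and finishes the converse by comparing $\rho_{\cdot}(\varphi_{b}(a\cdot a'))$ with $\rho_{\cdot}(\varphi_{b}(a))\rho_{\cdot}(\varphi_{b}(a'))$ via the homomorphism property of $\rho_{\cdot}$, rather than evaluating the identity at a general point as you do; neither affects the substance.
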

\begin{proof}
Let $ a,a' \in A $ and $ b \in B $. Then we have
\begin{equation} \label{eqn_induced_B_acts_on_A}
\varphi_{b}\rho_{\cdot}(a)\varphi_{b}^{-1}[a'] = \varphi_{b}[\varphi_{b}^{-1}(a') \cdot a^{-1}].
\end{equation}

If $ \varphi_{b} \in \Aut(A,\cdot) $ then \eqref{eqn_induced_B_acts_on_A} implies immediately that 
\begin{eqnarray*}
\varphi_{b}\rho_{\cdot}(a)\varphi_{b}^{-1}[a'] & = & a' \cdot \varphi_{b}(a)^{-1} \\
& = & \rho_{\cdot}(\varphi_{b}(a))[a'],
\end{eqnarray*}
so $ \varphi_{b} \rho_{\cdot}(A) \varphi_{b}^{-1} = \rho_{\cdot}(A) $.
 
Conversely, if $ \varphi_{b} \rho_{\cdot}(A) \varphi_{b}^{-1} = \rho_{\cdot}(A) $ then choosing $ a'= e $ in \eqref{eqn_induced_B_acts_on_A} we see that we must have $ \varphi_{b} \rho_{\cdot}(a) \varphi_{b}^{-1} = \rho_{\cdot}(\varphi_{b}(a)) $. Therefore
\[ \varphi_{b} \rho_{\cdot}(a\cdot a') \varphi_{b}^{-1} = \rho_{\cdot}(\varphi_{b}(a \cdot a')), \]
but also
\begin{eqnarray*}
\varphi_{b} \rho_{\cdot}(a\cdot a') \varphi_{b}^{-1} & = & 
(\varphi_{b} \rho_{\cdot}(a) \varphi_{b}^{-1})(\varphi_{b} \rho_{\cdot}(a') \varphi_{b}^{-1}) \\
& = & \rho_{\cdot}(\varphi_{b}(a))\rho_{\cdot}(\varphi_{b}(a')) \\
& = & \rho_{\cdot}(\varphi_{b}(a) \cdot \varphi_{b}(a')), 
\end{eqnarray*}
which implies that $ \varphi_{b}(a \cdot a') = \varphi_{b}(a) \cdot \varphi_{b}(a') $. 
\end{proof}

\begin{theorem} \label{thm_induced}
Let $ L/K $ be a Galois extension of fields whose Galois group $ (G,\circ) $ is the semidirect product of a normal subgroup $ (A,\circ) $ and a subgroup $ (B,\circ) $. Suppose we are given a Hopf-Galois structure on $ L^{B}/K $, with corresponding regular subgroup $ M \leq \Perm(X) $, and a Hopf-Galois structure on the extension $ L/L^{B} $, with corresponding regular subgroup $ N \leq \Perm(B) $. Let $ (B,\cdot,\circ) $ be the skew brace corresponding to $ N \leq \Perm(B) $.

Then there is a skew brace $ (A,\cdot,\circ) $ such that $ \rho_{\cdot}(A) = \psi(M) $, the skew brace $ A \rtimes^{\varphi}_{id} B $ is defined, and the corresponding Hopf-Galois structure on $ L/K $ coincides with that induced from $ M $ and $ N $. 
\end{theorem}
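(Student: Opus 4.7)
The plan is to extract the skew brace $(A,\cdot,\circ)$ from the given regular subgroup $M$ using Propositions \ref{prop_induced_X_to_A} and \ref{prop_induced_B_acts_on_A}, verify that $\varphi$ makes $B$ act on $A$ as skew braces so that $A \rtimes^{\varphi}_{id} B$ is defined via Theorem \ref{thm_external_semidirect} with $\theta$ trivial, and finally match the resulting regular subgroup of $\Perm(G)$ with $\mathrm{Im}(\nu)$.

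For the first step, since $M$ corresponds to a Hopf-Galois structure on $L^{B}/K$, it is regular and normalised by $\lambda_{X}(G)$. Proposition \ref{prop_induced_X_to_A} then says $\psi(M) \subseteq \Perm(A)$ is regular, normalised by $\lambda_{\circ}(A)$, and preserved under conjugation by each $\varphi_{b}$. The skew brace / regular subgroup dictionary recalled just before that proposition produces a binary operation $\cdot$ on $A$ making $(A,\cdot,\circ)$ into a skew brace with $\rho_{\cdot}(A) = \psi(M)$. Proposition \ref{prop_induced_B_acts_on_A} now upgrades the conjugation invariance to $\varphi_{b} \in \Aut(A,\cdot)$, so that $\varphi : (B,\circ) \to \Aut(A)$ is a homomorphism of skew brace automorphisms; hence the skew brace $B$ acts on the skew brace $A$ via $\varphi$. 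With $\theta$ trivial, both \eqref{eqn_external_sdp_1} and \eqref{eqn_external_sdp_2} reduce to tautologies, so Theorem \ref{thm_external_semidirect} guarantees that $A \rtimes^{\varphi}_{id} B$ is a skew brace.

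For the last step, let $\cdot$ denote the operation transported to $G$ along the isomorphism $\delta : G \to A \rtimes^{\varphi}_{id} B$, $\delta(a \circ b) = (a,b)$. Because $\theta$ is trivial this yields $(a \circ b) \cdot (a' \circ b') = (a \cdot a') \circ (b \cdot b')$, so $\rho_{\cdot}(a \circ b)[a' \circ b'] = (a' \cdot a) \circ (b' \cdot b)$. On the other hand, writing $\mu = \psi^{-1}(\rho_{\cdot}(a)) \in M$ and $\eta = \rho_{\cdot}(b) \in N$, formula \eqref{eqn_induced_HGS} gives $\nu(\mu,\eta)[a' \circ b'] = \psi(\mu)[a'] \circ \eta[b'] = (a' \cdot a) \circ (b' \cdot b)$ as well. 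Every element of $\rho_{\cdot}(G)$ arises this way, and both $\rho_{\cdot}(G)$ and $\mathrm{Im}(\nu)$ are regular subgroups of $\Perm(G)$, so they coincide, and the two Hopf-Galois structures agree.

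The main obstacle is notational rather than mathematical: one must coordinate three distinct right regular representations (of $(A,\cdot)$ inside $\Perm(A)$, of $(B,\cdot)$ inside $\Perm(B)$, and of $(G,\cdot)$ inside $\Perm(G)$) and keep straight how $\psi$ and $\delta$ intertwine them. Once this bookkeeping is in place, the proof reduces to a pair of short calculations chained together by the two preceding propositions.
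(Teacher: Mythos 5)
Your proposal is correct and takes essentially the same route as the paper: extract $(A,\cdot,\circ)$ from $\psi(M)$ via Propositions \ref{prop_induced_X_to_A} and \ref{prop_induced_B_acts_on_A}, take $\theta$ trivial in Theorem \ref{thm_external_semidirect}, and identify $\rho_{\cdot}(A\times B)$ with $\nu(M\times N)$. The only (harmless) discrepancy is your convention $\rho_{\cdot}(a)[a'] = a'\cdot a$ versus the paper's $\rho_{\cdot}(a)[a'] = a'\cdot a^{-1}$, which you apply consistently to both sides of the final comparison, so the matching of the two regular subgroups still goes through.
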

\begin{proof}
We recall that the Hopf-Galois structure induced from $ M $ and $ N $ corresponds to the regular subgroup $ \nu(M \times N) \leq \Perm(G) $, where $ \nu : M \times N \rightarrow \Perm(G) $ is defined by
\[ \nu(\mu,\eta)[a \circ b] = \psi(\mu)[a] \circ \eta[b]. \]

We note that since $ (B,\cdot,\circ) $ is the skew brace corresponding to $ N \leq \Perm(B) $, we have $ \rho_{\cdot}(B)=  N $. 

Next, since $ M $ is normalised by $ \lambda_{X}(G) $, by Proposition \ref{prop_induced_X_to_A} $ \psi(M) $ is normalised by $ \lambda_{\circ}(A) $ and $ \varphi_{b} \psi(M) \varphi_{b}^{-1} = \psi(M) $ for all $ b \in B $. The first of these conclusions implies that there is a skew brace $ (A,\cdot,\circ) $ such that $ \rho_{\cdot}(A) = \psi(M) $; the second, combined with Proposition \ref{prop_induced_B_acts_on_A}, implies that $ \varphi_{b} \in \Aut(A,\cdot) $ for all $ b \in B $, so the skew brace $ B $ acts on the skew brace  $ A $ via $ \varphi $. 

Now choosing $ \theta $ to be trivial in Theorem \ref{thm_external_semidirect} we see that the skew brace $ A \rtimes^{\varphi}_{id} B $ is defined. The corresponding regular subgroup of $ \Perm(G) $ is $ \rho_{\cdot}(A \times B) $, which acts as follows
\[ \rho_{\cdot}(a',b')[a \circ b] = \rho_{\cdot}(a')[a] \circ \rho_{\cdot}(b')[b]. \]
Recalling that $ \rho_{\cdot}(A) = \psi(M) $ and $ \rho_{\cdot}(B)=  N $, we see that this subgroup coincides with the subgroup $ \nu(M \times N) $ constructed above. 
\end{proof}

\section{Smash products and integral module structure} \label{sec_smash}

We continue to study a Galois extension of fields $ L/K $ whose Galois group $ (G,\circ) $ is the semidirect product of a normal subgroup $ (A,\circ) $ and a subgroup $ (B,\circ) $. 

In \cite{GR20} Gil-Mu\~{n}oz and Rio show that if $ H $ is a Hopf algebra giving a Hopf-Galois structure on $ L/K $ which is induced from Hopf-Galois structures on $ L/L^{B} $ and $ L^{B}/K $ then $ H \cong  H_{A} \otimes_{K} H_{B} $, where $ H_{A} $ and $ H_{B} $ are Hopf subalgebras such that $ L^{H_{A}} = L^{A} $ and $ L^{H_{B}} = L^{B} $. They also use this description  to study the structure of rings of algebraic integers in extensions of local or global fields. In this section we generalise these results to Hopf-Galois structures arising via Theorem \ref{thm_examples_SB_HGS}, using the language of skew braces.  

We suppose that $ G=(G,\cdot,\circ) $ is a skew brace which is the semidirect product of an ideal $ A $ and a left ideal $ B $. As described in Section \ref{sec_skew_braces}, the Hopf algebra giving the corresponding Hopf-Galois structure on $ L/K $ is $ H_{G} = L[G,\cdot]^{(G,\circ)} $, where $ (G,\circ) $ acts on $ L[G,\cdot] $ via \eqref{eqn_skew_brace_HGS_action}, and $ H_{A} = L[A,\cdot]^{(G,\circ)} $ and $ H_{B} = L[B,\cdot]^{(G,\circ)} $ are Hopf subalgebras of $ H_{G} $.

The fact that the group $ (G,\cdot) $ is the semidirect product of a normal subgroup $ (A,\cdot) $ and a subgroup $ (B,\cdot) $ implies that $ L[A,\cdot] $ and $ L[B,\cdot] $ are $ L $-Hopf subalgebras of $ L[G,\cdot] $ and the action of $ (B,\cdot) $ on $ (A,\cdot) $ by automorphisms extends to an $ L $-linear action of $ L[B,\cdot] $ on $ L[A,\cdot] $:
\begin{equation}
\left( \sum_{b \in B} z_{b} b \right) \bullet \left( \sum_{a \in A} w_{a} a \right) = \sum_{a \in A} \sum_{b \in B} w_{a} z_{b} bab^{-1}.
\end{equation}
This action respects the $ L $-Hopf algebra structure maps on $ L[A,\cdot] $, so the \textit{smash product} $ L[A,\cdot] \; \#_{L} \; L[B,\cdot] $ is defined and is an $ L $-Hopf algebra. It can be shown that the natural map $ L[A,\cdot] \; \#_{L} \; L[B,\cdot] \rightarrow L[G,\cdot] $ is an isomorphism of $ L $-Hopf algebras. Our first result is that this smash product description descends to $ H_{G} $.

\begin{proposition} \label{prop_H_G_smash_product}
The smash product $ H_{A} \; \#_{K} \; H_{B} $ is defined, is a $ K $-Hopf algebra, and is isomorphic to $ H_{G} $. 
\end{proposition}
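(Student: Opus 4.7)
The plan is to work first at the level of $ L $-Hopf algebras and then descend to $ K $ by taking $ (G,\circ) $-fixed points, using Galois descent for semilinear actions. The excerpt already notes that the multiplication map $ \mu_L : L[A,\cdot] \; \#_L \; L[B,\cdot] \to L[G,\cdot] $, $ w \otimes z \mapsto wz $, is an isomorphism of $ L $-Hopf algebras, reflecting the semidirect decomposition $ (G,\cdot) = (A,\cdot) \rtimes (B,\cdot) $. I want to restrict both $ \mu_L $ and the $ L $-smash product structure to the $ (G,\circ) $-fixed parts and identify the result with $ H_A \; \#_K \; H_B $.

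The key technical step is to verify that the $ \bullet $-action of $ L[B,\cdot] $ on $ L[A,\cdot] $ is $ (G,\circ) $-equivariant, so that it restricts to a $ K $-linear action of $ H_B $ on $ H_A $. The hypotheses provide three facts: $ \gamma_x \in \Aut(G,\cdot) $ for every $ x \in G $ (from the skew brace axioms), $ \gamma_x(A) = A $ and $ \gamma_x(B) = B $ (left-ideal property), and $ bab^{-1} \in A $ for $ b \in B, a \in A $ (normality of $ (A,\cdot) $ in $ (G,\cdot) $). Expanding both sides of $ x \star (z \bullet w) = (x \star z) \bullet (x \star w) $ via \eqref{eqn_skew_brace_HGS_action} and $ \gamma_x(bab^{-1}) = \gamma_x(b) \gamma_x(a) \gamma_x(b)^{-1} $ then yields agreement. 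Hence $ z \bullet w \in H_A $ whenever $ z \in H_B $ and $ w \in H_A $; since the identities that make $ \bullet $ a Hopf action of $ L[B,\cdot] $ on $ L[A,\cdot] $ are $ K $-linear, they persist on the subalgebras, so $ H_A \; \#_K \; H_B $ is a well-defined $ K $-Hopf algebra. The analogous check for $ \mu_L $ is immediate on basis elements from $ \gamma_x(ab) = \gamma_x(a)\gamma_x(b) $, so $ \mu_L $ is $ (G,\circ) $-equivariant and taking fixed points yields a $ K $-Hopf algebra isomorphism $ (L[A,\cdot] \; \#_L \; L[B,\cdot])^{(G,\circ)} \xrightarrow{\sim} H_G $.

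Finally, I would identify the fixed-point object with $ H_A \; \#_K \; H_B $ itself: as a $ K $-module, Galois descent for tensor products of semilinear $ (G,\circ) $-modules gives $ (L[A,\cdot] \otimes_L L[B,\cdot])^{(G,\circ)} = H_A \otimes_K H_B $, and the algebra and coalgebra structures inherited from the $ L $-smash product are, term by term, the defining formulas of $ H_A \; \#_K \; H_B $ using the restricted $ \bullet $-action identified above. Composing with the descent of $ \mu_L $ gives the desired isomorphism $ H_A \; \#_K \; H_B \cong H_G $. The main obstacle, though ultimately a short computation, is the equivariance identity $ x \star (z \bullet w) = (x \star z) \bullet (x \star w) $: this is where all the structural hypotheses --- normality of $ A $ in $ (G,\cdot) $, left-ideal closure of $ A $ and $ B $, and the fact that each $ \gamma_x $ is a $ \cdot $-automorphism --- enter simultaneously, and once it is in hand the remainder of the proof is routine Galois descent plus bookkeeping.
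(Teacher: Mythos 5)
Your proposal is correct and follows essentially the same route as the paper: the central step in both is the $(G,\circ)$-equivariance identity $g \star (z \bullet w) = (g \star z) \bullet (g \star w)$, proved via $\gamma_g(bab^{-1}) = \gamma_g(b)\gamma_g(a)\gamma_g(b)^{-1}$, after which the descent of the isomorphism $L[A,\cdot]\;\#_L\;L[B,\cdot] \cong L[G,\cdot]$ to $K$ is routine. The only cosmetic difference is that the paper verifies $L \otimes_K (H_A \;\#_K\; H_B) \cong L \otimes_K H_G$ by tensoring up and then descends, whereas you take fixed points of the $L$-level objects directly; these are equivalent formulations of the same Galois descent argument.
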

\begin{proof}
First we show that the action $ \bullet $ of $ L[B,\cdot] $ on $ L[A,\cdot] $  descends to an action of $ H_{B} $ on $ H_{A} $. Let $ w= \sum_{a} w_{a} a \in L[A,\cdot] $ and $ z = \sum_{b} z_{b} b \in L[B,\cdot] $ (with $ w_{a},z_{b} \in L $), and consider the action of $ g \in G $ via \eqref{eqn_skew_brace_HGS_action}:
\begin{eqnarray*}
g \star ( z \bullet w ) & = & g \star \left( \left(\sum_{b} z_{b} b \right) \bullet \left( \sum_{a} w_{a} a \right) \right) \\
& = & g \star \left( \sum_{a,b} z_{b} w_{a} bab^{-1} \right) \\
& = & \sum_{a,b} g(z_{b} w_{a}) \gamma_{g}(bab^{-1}) \\
& = & \sum_{a,b} g(z_{b}) g(w_{a}) \gamma_{g}(b)\gamma_{g}(a)\gamma_{g}(b)^{-1} \\ 
& = & \left(\sum_{b} g(z_{b}) \gamma_{g}(b) \right) \bullet \left( \sum_{a} g(w_{a}) \gamma_{g}(a) \right) \\
& = & (g \star z) \bullet (g \star w).
\end{eqnarray*}
Hence if $ z \in H_{B} $ and $ w \in H_{A} $ then $ z \bullet w \in H_{A} $, and so the action of $ L[B,\cdot] $ on $ L[A,\cdot] $ descends to an action of $ H_{B} $ on $ H_{A} $. This action respects the $ K $-Hopf algebra structure maps of $ H_{A} $, so the smash product $ H_{A} \; \#_{K} \; H_{B} $ is defined and is a $ K $-Hopf algebra. By Galois descent we have $ L \otimes_{K} H_{A} \cong L[A,\cdot] $ and $ L \otimes_{K} H_{B} \cong L[B,\cdot] $ as $ L $-Hopf algebras, and so
\[ L \otimes_{K} (H_{A} \; \#_{K} \; H_{B}) \cong L[A,\cdot] \; \#_{L} \; L[B,\cdot] \cong L[G,\cdot] \cong L \otimes_{K} H_{G} \]
as $ L $-Hopf algebras. Finally, since the natural map $ L[A,\cdot] \; \#_{L} \; L[B,\cdot] \rightarrow L[G,\cdot] $ is an isomorphism of $ L $-Hopf algebras that respects the actions of $ (G,\circ) $, it descends to an isomorphism of $ K $-algebras $ H_{A} \; \#_{K} \; H_{B} \cong H_{G} $. 
\end{proof}

The fact that $ (G,\circ) $ is the semidirect product of $ (A,\circ) $ and $ (B,\circ) $ also implies that the extensions $ L^{A}/K $ and $ L^{B}/K $ are linearly disjoint, so the natural map $ L^{A} \otimes_{K} L^{B} \rightarrow L^{A}L^{B} = L $ is an isomorphism of $ L^{A} $-algebras and $ L^{B} $-algebras. In the remainder of this section we relate properties of the Hopf-Galois structure given by $ H_{G} $ on $ L/K $with properties of certain Hopf-Galois structures on some of the extensions diagrammed below. 

\[
\xymatrixcolsep{3pc} 
\xymatrixrowsep{4pc}
\xymatrix{
& \ar@{-}[dl] L \ar@{-}[dr] & \\
 L^{A} \ar@{-}[dr] & & \ar@{-}[dl] L^{B} \\
& K & \\
} 
\]

We have $ L^{H_{A}} = L^{A} $ as described in Section \ref{sec_skew_braces}, and so (as discussed in Section \ref{sec_Introduction}) $ L^{A} \otimes_{K} H_{A} $ gives a Hopf-Galois structure on $ L/L^{A} $. Recalling that $ H_{A}=L[A,\cdot]^{(G,\circ)} $, the theory of Galois descent implies that the natural  map $ L^{A} \otimes_{K} H_{A} \rightarrow L[A,\cdot]^{(A,\circ)} $ is an isomorphism of $ L^{A} $-Hopf algebras; it follows that the Hopf-Galois structure on $ L/L^{A} $ given by $ L^{A} \otimes_{K} H_{A} $ coincides with the one arising from the skew brace $ (A,\cdot,\circ) $. Similarly, we have $ L^{H_{B}}=L^{B} $ and the Hopf-Galois structure on $ L/L^{B} $ given by $ L^{B} \otimes_{K} H_{B} $ coincides with the one arising from the skew brace $ (B,\cdot,\circ) $.

Our next aim is to relate these Hopf-Galois structures with certain Hopf-Galois structures on the extensions $ L^{A}/K $ and $ L^{B}/K $. The case of the extension $ L^{A}/K $ is straightforward. 

\begin{proposition} \label{prop_H_B_HGS}
The $ K $-Hopf algebra $ H_{B} $ gives a Hopf-Galois structure on the extension $ L^{A}/K $. 
\end{proposition}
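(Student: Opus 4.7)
The plan is to identify $H_{B}$, as a $K$-Hopf algebra, with the Hopf algebra assigned by the correspondence of Section \ref{sec_skew_braces} to the skew brace $(B,\cdot,\circ)$ acting on $L^{A}/K$. Because $(G,\circ) = (A,\circ) \rtimes (B,\circ)$ induces $(G/A,\circ) \cong (B,\circ)$, the extension $L^{A}/K$ is Galois with Galois group identified with $(B,\circ)$, and the Hopf algebra attached to $(B,\cdot,\circ)$ in this setting is $L^{A}[B,\cdot]^{(B,\circ)}$. Once this identification is established the proposition follows at once.

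To carry out the identification, I would compute $H_{B} = L[B,\cdot]^{(G,\circ)}$ by descent in two stages, exploiting that $(A,\circ)$ is normal in $(G,\circ)$ with quotient isomorphic to $(B,\circ)$:
\[
H_{B} = L[B,\cdot]^{(G,\circ)} = \bigl(L[B,\cdot]^{(A,\circ)}\bigr)^{(B,\circ)}.
\]
The key point is the inner layer. The action of $a \in A$ on $\sum_{b} z_{b} b \in L[B,\cdot]$ defined by \eqref{eqn_skew_brace_HGS_action} is $\sum_{b} a(z_{b}) \gamma_{a}(b)$; the identity $\gamma_{a}(b) = b$ for $a \in A$ and $b \in B$, established at the start of the proof of the proposition preceding Theorem \ref{thm_external_semidirect}, collapses this to $\sum_{b} a(z_{b}) b$. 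Hence $L[B,\cdot]^{(A,\circ)} = L^{A}[B,\cdot]$, and taking $(B,\circ)$-invariants yields $H_{B} \cong L^{A}[B,\cdot]^{(B,\circ)}$ as $K$-Hopf algebras.

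To complete the argument I would verify that the $H_{B}$-module structure on $L^{A}$ inherited from the action on $L$ via \eqref{eqn_skew_brace_HGS_action_on_L} coincides with the Hopf-Galois action of $L^{A}[B,\cdot]^{(B,\circ)}$ on $L^{A}$. This requires the preliminary remark that each $b \in B$ preserves $L^{A}$, which follows from $(A,\circ) \trianglelefteq (G,\circ)$, and then reduces to tracing both actions back to the natural action of $(B,\circ)$ on $L^{A}$ coming from Galois theory. I do not foresee a serious obstacle: the heart of the argument is the identity $\gamma_{a}(b) = b$, which is already in hand, and the remainder is routine bookkeeping in Galois descent together with a comparison of two actions that are both ultimately prescribed by the group $(B,\circ) \cong (G/A,\circ)$.
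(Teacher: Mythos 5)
Your proof is correct, but it takes a genuinely different route from the paper. The paper descends the short exact sequence of $L$-Hopf algebras $L \to L[A,\cdot] \to L[G,\cdot] \to L[B,\cdot] \to L$ to a short exact sequence $K \to H_A \to H_G \to H_B \to K$ of $K$-Hopf algebras and then invokes Byott's result (\cite[Lemma 4.1]{By02}) that the quotient $H_G//H_A$ acts on $L^{A}$ to give a Hopf-Galois structure; the splitting identifies that quotient with the Hopf subalgebra $H_B$ and its action on $L^{A}$ with the restricted action. You instead compute $H_B = L[B,\cdot]^{(G,\circ)}$ directly by two-stage descent, using the identity $\gamma_a(b)=b$ (for $a \in A$, $b \in B$) to collapse the inner layer to $L^{A}[B,\cdot]$, and then recognise $L^{A}[B,\cdot]^{(B,\circ)}$ as exactly the Stefanello--Trappeniers Hopf algebra attached to the skew brace $(B,\cdot,\circ)$ for the Galois extension $L^{A}/K$ with group $(B,\circ)\cong (G/A,\circ)$. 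Both arguments are complete (your observations that the coefficients of elements of $H_B$ lie in $L^{A}$ and that each $b \in B$ preserves $L^{A}$ take care of well-definedness of the action, and the two actions visibly agree via \eqref{eqn_skew_brace_HGS_action_on_L}). The paper's route is shorter and fits its exact-sequence viewpoint; yours is more self-contained, avoids the citation to \cite{By02}, and makes explicit that the resulting Hopf-Galois structure on $L^{A}/K$ is the one corresponding to the skew brace $(B,\cdot,\circ) \cong G/A$ --- a fact the paper only records in passing in Section \ref{sec_skew_braces}.
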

\begin{proof}
The fact that $ (G,\cdot) $ is the semidirect product of the normal subgroup $ (A,\cdot) $ and the subgroup $ (B,\cdot) $ implies that $ L[A,\cdot] $ is a normal Hopf subalgebra of $ L[G,\cdot] $ and there is a short exact sequence of $ L $-Hopf algebras
\[ L \rightarrow L[A,\cdot] \rightarrow L[G,\cdot] \rightarrow L[B,\cdot] \rightarrow L. \]
Since in addition $ A $ is an ideal, and $ B $ a left ideal, of the skew brace $ G $, this short exact sequence descends to a short exact sequence of $ K $-Hopf algebras (see \cite[Example 2.1]{ST23})
\[ K \rightarrow L[A,\cdot]^{(G,\circ)} \rightarrow L[G,\cdot]^{(G,\circ)} \rightarrow L[B,\cdot]^{(G,\circ)} \rightarrow K; \]
that is
\[ K \rightarrow H_{A} \rightarrow H_{G} \rightarrow H_{B} \rightarrow K. \]
Using this short exact sequence, the action of $ H_{G} $ on $ L $ (giving a Hopf-Galois structure on $ L/K $) yields an action of $ H_{B} $ on $ L^{A} $ (giving a Hopf-Galois structure on $ L^{A}/K $) \cite[Lemma 4.1]{By02}. But this action is simply the action of $ H_{B} $, viewed as a Hopf subalgebra of $ H_{G} $. Hence $ H_{B} $ gives a Hopf-Galois structure on $ L^{A}/K $. 
\end{proof}

Combining Proposition \ref{prop_H_B_HGS} with the isomorphism $ H_{G} \cong H_{A} \; \#_{K} \; H_{B} $ obtained in Proposition \ref{prop_H_G_smash_product} and the isomorphism $ L^{A} \otimes_{K} L^{B} \cong L $ reveals useful information about the action of $ H_{G} $ on $ L $. If $ w \in H_{A} $ and $ z \in H_{B} $ then for all $ \alpha \in L^{A} $ and $ \beta \in L^{B} $ we have
\begin{eqnarray}
(wz)[\alpha\beta] & = & w[ \beta z[\alpha] ] \mbox{ since $ \beta \in L^{B} $ } \nonumber \\ 
& = & w[\beta] z[\alpha] \mbox{ since $ z[\alpha] \in L^{A} $ by Proposition \ref{prop_H_B_HGS}. } \label{eqn_H_G_action_on_product}
\end{eqnarray}
It is important to note, however, that $ w[\beta] $ need not be an element of $ L^{B} $, since $ H_{A} $ need not give a Hopf-Galois structure on $ L^{B}/K $. A sufficient condition for this to occur is that $ H_{B} $ is a normal Hopf subalgebra of $ H_{G} $ (if and only if $ B $ is a strong left ideal of $ G $, if and only if $ (G,\cdot) \cong (A,\cdot) \times (B,\cdot) $); in this case we may argue as in Proposition \ref{prop_H_B_HGS} (see \cite[Section 8.3]{HAGMT}). More generally, we shall see that a Hopf algebra closely related to $ H_{A} $ \textit{does} gives a Hopf-Galois structure on $ L^{B}/K $. 

To describe this Hopf-Galois structure we once again employ the Greither-Pareigis classification, using the notation established in Section \ref{sec_induced_HGS}. In particular, we write $ X $ for the left coset space of $ (B,\circ) $ in $ (G,\circ) $ and recall that there is an isomorphism of groups $ \psi : \Perm(X) \rightarrow \Perm(A) $ defined by 
\[ \psi(\mu)[a] \circ B = \mu[a \circ B] \mbox{ for all } a \in A. \] 
Using this, we define a homomorphism $ \overline{\rho_{\cdot}} : (A,\cdot) \rightarrow \Perm(X) $ by $ \overline{\rho_{\cdot}} = \psi^{-1} \rho_{\cdot} $. Explicitly, we have
\[ \overline{\rho_{\cdot}}(a)[a' \circ B] = (a' a^{-1}) \circ B \mbox{ for all } a,a' \in A. \]

\begin{lemma} \label{lem_A_bar_regular}
The subgroup $ \overline{A} = \overline{\rho_{\cdot}}(A) $ of $ \Perm(X) $ is regular and normalised by $ \lambda_{X}(G) $.
\end{lemma}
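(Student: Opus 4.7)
The plan is to verify the two claims (regularity and normalisation by $\lambda_X(G)$) separately, leveraging the isomorphism $\psi$ to transfer facts about $\rho_{\cdot}(A)\subseteq\Perm(A)$ to $\overline{A}\subseteq\Perm(X)$.

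For regularity, I would first unpack the definition to get the explicit formula $\overline{\rho_{\cdot}}(a)[a'\circ B] = (a'\cdot a^{-1})\circ B$, which follows immediately from the description of $\psi^{-1}$ and the right regular representation of $(A,\cdot)$. Since $\rho_{\cdot}:(A,\cdot)\to\Perm(A)$ is injective and $\psi$ is a group isomorphism, $|\overline{A}|=|A|=|X|$, so it suffices to show transitivity: given $a_{1}\circ B, a_{2}\circ B\in X$, the element $a=a_{2}^{-1}\cdot a_{1}\in A$ sends the first to the second. This is essentially transporting the regularity of $\rho_{\cdot}(A)$ on $A$ across the bijection $A\to X$, $a\mapsto a\circ B$.

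For the normalisation property, I plan to apply Proposition \ref{prop_induced_X_to_A} directly: since $\psi(\overline{A})=\rho_{\cdot}(A)$, it is enough to check that $\rho_{\cdot}(A)$ is normalised by $\lambda_{\circ}(A)$ and that $\varphi_{b}\,\rho_{\cdot}(A)\,\varphi_{b}^{-1}=\rho_{\cdot}(A)$ for every $b\in B$. The first condition is immediate from the standard correspondence (recalled in Section \ref{sec_skew_braces}) between the skew brace structure $(A,\cdot,\circ)$ and the regular $\lambda_{\circ}(A)$-stable subgroup $\rho_{\cdot}(A)\subseteq\Perm(A)$. For the second, Proposition \ref{prop_induced_B_acts_on_A} reduces the claim to showing that $\varphi_{b}\in\Aut(A,\cdot)$ for each $b\in B$.

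The main obstacle is justifying this last point, and here I would invoke the hypothesis that $G$ is the internal semidirect product of the ideal $A$ and the left ideal $B$. The preceding proposition of Section \ref{sec_semidirect} then realises $G$ as an external semidirect product $A\rtimes_{\theta}^{\varphi}B$ with $\varphi_{b}(a)=b\circ a\circ\bar{b}$, and the implication \ref{enum_external_sdp_1}$\Rightarrow$\ref{enum_external_sdp_2} of Theorem \ref{thm_external_semidirect} guarantees that $B$ acts on $A$ as skew braces via $\varphi$; in particular $\varphi_{b}\in\Aut(A,\cdot)$, which completes the verification.
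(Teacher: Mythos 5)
Your proof is correct, and the regularity half is essentially the paper's own argument (explicit formula for $\overline{\rho_{\cdot}}$, the count $|\overline{A}|=|A|=|X|$, and transitivity). For the normalisation half you take a genuinely different route: the paper proves it by a direct computation, showing via the skew brace axiom \eqref{eqn_skew_brace} that
\[ \lambda_{X}(g)\,\overline{\rho_{\cdot}}(a)\,\lambda_{X}(\bar{g}) = \overline{\rho_{\cdot}}(\gamma_{g}(a)) \quad \mbox{for all } g \in G,\ a \in A, \]
whereas you transport the problem through $\psi$ and invoke Proposition \ref{prop_induced_X_to_A} together with Proposition \ref{prop_induced_B_acts_on_A}, reducing everything to the two facts that $\rho_{\cdot}(A)$ is normalised by $\lambda_{\circ}(A)$ (the skew brace/regular subgroup dictionary applied to the ideal $A$) and that $\varphi_{b} \in \Aut(A,\cdot)$ (which you correctly extract from the internal-to-external semidirect product proposition of Section \ref{sec_semidirect} and the implication \ref{enum_external_sdp_1}$\Rightarrow$\ref{enum_external_sdp_2} of Theorem \ref{thm_external_semidirect}). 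Your reduction is clean and arguably more conceptual, since it reuses the machinery of Section \ref{sec_induced_HGS} rather than redoing a coset computation; what it does not deliver is the explicit conjugation formula displayed above, which the paper needs immediately after the lemma to write down the semilinear action of $(G,\circ)$ on $L[\overline{A}]$ and hence the descended Hopf algebra $H_{\overline{A}}$. If you adopt your argument, you should still record that formula (it follows from $\psi^{-1}$ applied to $\lambda_{\circ}(a')\rho_{\cdot}(a)\lambda_{\circ}(\bar{a}') = \rho_{\cdot}(\gamma_{a'}(a))$ and $\varphi_{b}\rho_{\cdot}(a)\varphi_{b}^{-1} = \rho_{\cdot}(\varphi_{b}(a))$, as in the proof of Proposition \ref{prop_induced_B_acts_on_A}, together with $\gamma_{a'\circ b} = \gamma_{a'}\varphi_{b}$ on $A$), or the subsequent construction of $H_{\overline{A}}$ loses its justification.
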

\begin{proof}
We have $ |X|=|A| $ because the elements of $ A $ form a system of coset representatives for $ (B,\circ) $ in $ (G,\circ) $, and we have $ |\overline{A}| = |A| $ because $ \overline{\rho_{\cdot}} $ is the composition of the injective map $ \rho_{\cdot} $ and the isomorphism $ \psi^{-1} $; hence $ |\overline{A}| = |X| $. In addition, $ \overline{A} $ acts transitively on $ X $, since for all $ a \in A $ we have
\[ \overline{\rho_{\cdot}}(a^{-1})[e \circ B] = a \circ B. \]
Hence $ \overline{A} $ is a regular subgroup of $ \Perm(X) $. To show that it is normalised by $ \lambda_{X}(G) $, let $ a \in A $ and $ g \in G $; then for all $ a' \circ B \in X $ we have
\begin{eqnarray*}
\lambda_{X}(g) \overline{\rho_{\cdot}}(a) \lambda_{X}(\bar{g}) [a' \circ B] 
& = & g \circ \overline{\rho_{\cdot}}(a) [\bar{g} \circ a' \circ B] \\
& = & (g \circ (\bar{g} \circ a')a^{-1}) \circ B \\
& = & a'g^{-1}(g \circ a^{-1}) \circ B \mbox{ by \eqref{eqn_skew_brace}} \\
& = & a' \gamma_{g}(a^{-1}) \circ B \\
& = & \overline{\rho_{\cdot}}(\gamma_{g}(a)) [a' \circ B].
\end{eqnarray*}
Hence $ \overline{A} $ is normalised by $ \lambda_{X}(G) $, as claimed.
\end{proof}

Lemma \ref{lem_A_bar_regular} implies that $ \overline{A} $ corresponds to a Hopf-Galois structure on $ L^{B}/K $. The process of constructing this Hopf-Galois structure (detailed in \cite{GP87}) is very similar to the process for constructing the Hopf-Galois structure on a Galois extension corresponding to a skew brace, as described in Section \ref{sec_skew_braces}. The fact that $ \overline{A} $ is normalised by $ \lambda_{X}(G) $ implies that the group $ (G,\circ) $ acts semilinearly on the $ L $-Hopf algebra $ L[\overline{A}] $ by the rule
\begin{eqnarray*} 
g \star \sum_{a \in A} c_{a} \overline{\rho_{\cdot}}(a)
& = & \sum_{a \in A} g(c_{a}) \lambda_{X}(g) \overline{\rho_{\cdot}}(a) \lambda_{X}(g)^{-1} \\
& = & \sum_{a \in A} g(c_{a}) \overline{\rho_{\cdot}}(\gamma_{g}(a)) 
\end{eqnarray*}
and the fixed ring $ H_{\overline{A}} = L[\overline{A}]^{(G,\circ)} $ is a $ K $-Hopf algebra, which acts on $ L^{B} $ via the rule
\begin{eqnarray} 
\left( \sum_{a \in A} c_{a} \overline{\rho_{\cdot}}(a) \right) [t] 
& = & \sum_{a \in A} c_{a} \overline{\rho_{\cdot}}(a)^{-1}(e \circ B)[t] \nonumber \\
& = & \sum_{a \in A} c_{a} (a \circ B)[t] \nonumber \\
& = & \sum_{a \in A} c_{a} a[t] \label{eqn_GP_action_on_L}
\end{eqnarray}
to give a Hopf-Galois structure on $ L^{B}/K $.

To further clarify the relationship between $ H_{\overline{A}} $ and $ H_{A} $, we have the following. 

\begin{proposition} \label{prop_H_A_vs_H_A_bar}
The $ L^{A} $-Hopf algebra $ L^{A} \otimes_{K} H_{\overline{A}} $ gives a Hopf-Galois structure on $ L/L^{A} $, which is isomorphic to the Hopf-Galois structure given by $ L^{A} \otimes_{K} H_{A} $.
\end{proposition}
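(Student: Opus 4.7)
My plan is to show that $H_A$ and $H_{\overline{A}}$ are isomorphic as $K$-Hopf algebras in a way that intertwines their actions on $L$, and then to obtain the stated result by tensoring up with $L^A$.

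The first step is the observation that $\overline{\rho_{\cdot}} = \psi^{-1} \circ \rho_{\cdot}$ is, by construction, an injective group homomorphism $(A,\cdot) \to \Perm(X)$ whose image is precisely $\overline{A}$. Thus it restricts to a group isomorphism $(A,\cdot) \cong \overline{A}$; extending $L$-linearly gives an isomorphism of $L$-Hopf algebras $\Phi : L[A,\cdot] \to L[\overline{A}]$ (both sides are group Hopf algebras with group-like basis elements, and $\Phi$ respects this).

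The key step is to verify that $\Phi$ intertwines the semilinear $(G,\circ)$-actions: that on $L[A,\cdot]$ given by \eqref{eqn_skew_brace_HGS_action}, and that on $L[\overline{A}]$ described immediately before the proposition. Both are $L$-semilinear, and the calculation
\[ \lambda_{X}(g) \, \overline{\rho_{\cdot}}(a) \, \lambda_{X}(\bar{g}) = \overline{\rho_{\cdot}}(\gamma_{g}(a)) \]
carried out inside the proof of Lemma \ref{lem_A_bar_regular} shows that each action sends a group element indexed by $a$ to the one indexed by $\gamma_g(a)$. Hence $\Phi$ is $(G,\circ)$-equivariant, and by Galois descent it restricts to a $K$-Hopf algebra isomorphism $H_A \cong H_{\overline{A}}$. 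Tensoring with $L^A$ over $K$ yields an isomorphism $L^A \otimes_K H_A \cong L^A \otimes_K H_{\overline{A}}$ of $L^A$-Hopf algebras.

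Since $L^A \otimes_K H_A$ gives a Hopf-Galois structure on $L/L^A$ (recorded in the paragraph preceding the proposition), transporting through the isomorphism produces a Hopf-Galois structure of $L^A \otimes_K H_{\overline{A}}$ on $L/L^A$, isomorphic to the original. The main (mild) obstacle is to verify that this transported action is the \emph{natural} one, namely the $L^A$-linear extension of \eqref{eqn_GP_action_on_L} along $L \cong L^A \otimes_K L^B$. I would check this by comparing the two actions on $L^A$ (trivial in both cases, via the counit) and on $L^B$: an element $\sum_{a} c_{a} a \in H_A$ acts on $\beta \in L^B$ by $\sum_{a} c_{a} a[\beta]$ using the Galois action of $(A,\circ) \subseteq (G,\circ)$, and its image $\Phi\!\left(\sum_{a} c_{a} a\right) = \sum_{a} c_{a} \overline{\rho_{\cdot}}(a)$ acts on $\beta$ by exactly the same formula via \eqref{eqn_GP_action_on_L}. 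Since $L = L^A \cdot L^B$, the two $L^A$-linear extensions agree on all of $L$, completing the identification of Hopf-Galois structures.
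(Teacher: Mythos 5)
Your proof is correct and follows essentially the same route as the paper: the heart of both arguments is that $\overline{\rho_{\cdot}}$ is a group isomorphism $(A,\cdot)\to\overline{A}$ intertwining the $\gamma$-actions (the computation already done in Lemma \ref{lem_A_bar_regular}), which yields a Hopf algebra isomorphism by descent, followed by the check via \eqref{eqn_GP_action_on_L} that the two actions agree on $L^{B}$ and hence, by $L^{A}$-linearity and linear disjointness, on all of $L$. The only cosmetic difference is that you descend all the way to $K$ and tensor back up (and transport the Hopf-Galois structure through the isomorphism) where the paper works directly with the $(A,\circ)$-fixed rings over $L^{A}$ and invokes the base-change result \cite[(2.11)]{TWE}; this does not change the substance.
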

\begin{proof}
Recall that the natural map $ L^{A} \otimes_{K} L^{B} \rightarrow L^{A}L^{B} = L $ is an isomorphism of $ K $-algebras. By \cite[(2.11)]{TWE}, the fact that the $ K $-Hopf algebra $ H_{\overline{A}} $ gives a Hopf-Galois structure on $ L^{B}/K $ implies that the $ L^{A} $-Hopf algebra $ L^{A} \otimes_{K} H_{\overline{A}} $ gives a Hopf-Galois structure on $ (L^{A} \otimes_{K} L^{B}) / L^{A} $; that is, on $ L/L^{A} $. To show that this Hopf-Galois structure coincides with that given by $ L^{A} \otimes_{K} H_{A} $, we show that there is an isomorphism of $ L^{A} $-Hopf algebras $ \phi : L^{A} \otimes_{K} H_{A} \rightarrow L^{A} \otimes_{K} H_{\overline{A}}  $ such that $ \phi(z)[t] = z[t] $ for all $ z \in L^{A} \otimes_{K} H_{A} $ and all $ t \in L $.

We have seen that we may identify the $ L^{A} $-Hopf algebra $ L^{A} \otimes_{K} H_{A} $ with $ L[A,\cdot]^{(A,\circ)} $; a similar argument allows us to identify $ L^{A} \otimes_{K} H_{\overline{A}} $ with $ L[\overline{A}]^{(A,\circ)} $. The $ L^{A} $-Hopf algebras $ L[A,\cdot]^{(A,\circ)} $ and $ L[\overline{A}]^{(A,\circ)} $ are isomorphic if and only if there is an isomorphism of groups $ (A,\cdot) \rightarrow \overline{A} $ that respects the actions of $ (A,\circ) $ on each side. The details of the proof of Lemma \ref{lem_A_bar_regular} show that the map $ \overline{\rho_{\cdot}} $ has this property; it therefore extends to an isomorphism of $ L^{A} $-Hopf algebras $ \phi : L[A,\cdot]^{(A,\circ)} \rightarrow L[\overline{A}]^{(A,\circ)} $. 

Finally, we show that $ \phi(z)[t] = z[t] $ for all $ z \in L[A,\cdot]^{(A,\circ)} $ and all $ t \in L $. Since the Hopf algebras act $ L^{A} $-linearly and  $ L^{A}/K $ and $ L^{B}/K $ are linearly disjoint, it is sufficient to consider $ t \in L^{B} $. But for these elements the equality follows immediately from \eqref{eqn_GP_action_on_L}. Hence the $ L^{A} $-Hopf algebras 
$ L^{A} \otimes_{K} H_{\overline{A}} $ and $ L^{A} \otimes_{K} H_{A} $ give isomorphic Hopf-Galois structures on $ L/L^{A} $.  
\end{proof}

As a first example of how the properties of the Hopf-Galois structures given by $ H_{G} $ on $ L/K $, by $ H_{B} $ on $ L^{A}/K $, and by $ H_{\overline{A}} $ on $ L^{B}/K $ are connected to one another, we consider \textit{generalised normal basis generators}. The Hopf-Galois analogue of the classical normal basis theorem \cite[(2.16)]{TWE} states that $ L $ is a free $ H_{G} $-module of rank one; it is often desirable to find explicit generators. The description of $ H_{G} $ obtained in Proposition \ref{prop_H_G_smash_product} provides a method to do this. 

\begin{proposition} \label{prop_NBG}
Let $ \alpha \in L^{A} $ be a free generator of $ L^{A} $ as an $ H_{B} $-module and $ \beta \in L^{B} $ be a free generator of $ L^{B} $ as an $ H_{\overline{A}} $-module. Then
\begin{enumerate}
\item $ \beta $ is a free generator of $ L $ as an $ L^{A} \otimes_{K} H_{A} $-module; \label{enum_NBG_1}
\item $ \alpha\beta $ is a free generator of $ L $ as an $ H_{G} $-module. \label{enum_NBG_2}
\end{enumerate}
\end{proposition}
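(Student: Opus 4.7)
The plan is to bootstrap from $\beta$ being a free $H_{\overline{A}}$-generator of $L^B$ to a free $L^A \otimes_K H_A$-generator of $L$, giving part \ref{enum_NBG_1}, and then to combine this with the smash product identity \eqref{eqn_H_G_action_on_product} and the generator $\alpha$ to produce a free $H_G$-generator $\alpha\beta$ of $L$, giving part \ref{enum_NBG_2}.

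For part \ref{enum_NBG_1}, I would use the isomorphism $L^A \otimes_K L^B \cong L$ of $L^A$-algebras arising from linear disjointness, together with the Hopf-Galois base change result \cite[(2.11)]{TWE}: this shows that the $L^A$-Hopf algebra $L^A \otimes_K H_{\overline{A}}$ gives a Hopf-Galois structure on $L/L^A$, and that any free $H_{\overline{A}}$-generator of $L^B$ is automatically a free generator of $L$ over $L^A \otimes_K H_{\overline{A}}$; hence $\beta$ freely generates $L$ over $L^A \otimes_K H_{\overline{A}}$. By Proposition \ref{prop_H_A_vs_H_A_bar} the isomorphism $\phi: L^A \otimes_K H_A \to L^A \otimes_K H_{\overline{A}}$ of $L^A$-Hopf algebras is compatible with the respective actions on $L$, so $\beta$ is also a free generator of $L$ over $L^A \otimes_K H_A$.

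For part \ref{enum_NBG_2}, the Hopf-Galois analogue of the normal basis theorem (\cite[(2.16)]{TWE}) tells us that $L$ is a free $H_G$-module of rank one; since $\dim_K H_G = [L:K]$ it suffices to show that the evaluation map $H_G \to L$, $h \mapsto h[\alpha\beta]$, is surjective. By Proposition \ref{prop_H_G_smash_product} every element of $H_G$ is a $K$-linear combination of products $wz$ with $w \in H_A$ and $z \in H_B$, and \eqref{eqn_H_G_action_on_product} gives $(wz)[\alpha\beta] = w[\beta]\,z[\alpha]$. Since $\{z[\alpha] : z \in H_B\}$ is a $K$-basis of $L^A$ by hypothesis, the $K$-span of $\{(wz)[\alpha\beta]\}$ equals the $L^A$-span of $\{w[\beta] : w \in H_A\}$, which by part \ref{enum_NBG_1} equals $(L^A \otimes_K H_A)[\beta] = L$.

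The main subtlety is that $w[\beta]$ need not lie in $L^B$ (as flagged in the remark following \eqref{eqn_H_G_action_on_product}), so part \ref{enum_NBG_2} cannot be proved by a naive argument inside $L^B$; the bookkeeping must happen inside $L$, and the content of part \ref{enum_NBG_1} is precisely what converts the action of $H_A$ on $\beta$, together with $L^A$-scalars, into a spanning set for $L$. Once that is in place the identity \eqref{eqn_H_G_action_on_product} does all the remaining work.
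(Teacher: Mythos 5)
Your proof is correct and follows essentially the same route as the paper's: part \ref{enum_NBG_1} by base-changing the free generator $\beta$ along $L^{A} \otimes_{K} (-)$ and transferring to $L^{A} \otimes_{K} H_{A}$ via Proposition \ref{prop_H_A_vs_H_A_bar}, and part \ref{enum_NBG_2} by the computation $H_{G}[\alpha\beta] = H_{A}[\beta]\,H_{B}[\alpha] = L^{A}H_{A}[\beta] = (L^{A}\otimes_{K}H_{A})[\beta] = L$ using \eqref{eqn_H_G_action_on_product}. Your explicit observation that surjectivity of the evaluation map plus the dimension count yields freeness is left implicit in the paper, but it is the same underlying argument.
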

\begin{proof}
To prove \ref{enum_NBG_1} we begin with the fact that $ H_{\overline{A}} [\beta] = L^{B} $; this implies that $ L^{A} \otimes_{K} H_{\overline{A}} [\beta] = L^{A} \otimes_{K} L^{B} $, and identifying $ L^{A} \otimes_{K} L^{B} $ with $ L $ yields $ (L^{A} \otimes_{K} H_{\overline{A}}) [\beta] = L $. Finally, by Proposition \ref{prop_H_A_vs_H_A_bar} we have $ (L^{A} \otimes_{K} H_{A}) [\beta] = L $, as claimed.

To prove \ref{enum_NBG_2} we use the description of $ H_{G} $ as a smash product (Proposition \ref{prop_H_G_smash_product}) along with the properties of the action of this smash product on elements of $ L $ given in \eqref{eqn_H_G_action_on_product}. We have

\begin{align*}
H_{G}[\alpha\beta] & = (H_{A} \; \# \; H_{B})[\alpha\beta] \\
& = H_{A}[\beta] H_{B}[\alpha] \\
& = L^{A} H_{A}[\beta] \\
& = (L^{A} \otimes_{K} H_{A})[\beta] \\
& = L 
\end{align*}
by part \ref{enum_NBG_1}. This establishes \ref{enum_NBG_2}.
\end{proof}

Finally, we suppose that $ L/K $ is an extension of local fields and turn to the study of integral module structure, which has been a fruitful application of Hopf-Galois theory; we refer the reader to \cite{HAGMT} or \cite{TWE} for detailed surveys. For each intermediate field of the extension $ L/K $, we write $ \O_{F} $ for the ring of integers (valuation ring) of $ F $. If $ H $ is a Hopf algebra giving a Hopf-Galois structure on $ L/K $ then we can study the structure of $ \O_{L} $ as a module over its \textit{associated order} in $ H $:
\[ \A_{H} = \{ h \in H \mid h(\O_{L}) \subseteq \O_{L} \}, \]
with particular emphasis on the question of whether it is free (necessarily of rank one). In fact, $ \A_{H} $ is the only order in $ H $ over which $ \O_{L} $ can be free \cite[(12.5)]{TWE}. 

Continuing the theme of this section, we will relate the structure of $ \O_{L} $ over its associated order $ \A_{G} $ in $ H_{G} $ with the structure of $ \O_{L^{A}} $ over its associated order $ \A_{B} $ in $ H_{B} $ and the structure of $ \O_{L^{B}} $ over its associated order $ \A_{\overline{A}} $ in $ H_{\overline{A}} $. Our result generalises \cite[Theorem 5.16]{GR20}, which addresses this question in the case of induced Hopf-Galois structures.  

\begin{proposition}
Let $ L/K $ be an extension of local fields. Suppose that $ L^{A}/K $ is unramified and that $ \O_{L^{B}} $ is a free $ \A_{\overline{A}} $-module. Then
\begin{enumerate}
\item $ \O_{L} $ is a free $ \A_{L^{A} \otimes H_{A}} $-module; \label{enum_integral_1}
\item $ \O_{L} $ is a free $ \A_{G} $-module.
\label{enum_integral_2}
\end{enumerate}
\end{proposition}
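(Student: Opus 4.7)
For part (i), the plan is first to invoke Proposition \ref{prop_H_A_vs_H_A_bar} to identify $L^A \otimes_K H_A$ with the isomorphic $L^A$-Hopf algebra $L^A \otimes_K H_{\overline{A}}$; under this identification the associated orders of $\O_L$ coincide, so it suffices to establish freeness over $\A_{L^A \otimes_K H_{\overline{A}}}$. The unramified hypothesis on $L^A/K$ yields the standard isomorphism $\O_L \cong \O_{L^A} \otimes_{\O_K} \O_{L^B}$ (linear disjointness plus unramified base change). Analogously, expressing an element of $L^A \otimes_K H_{\overline{A}}$ in an $\O_K$-basis of $\O_{L^A}$ and checking integrality of its action on $\O_{L^B}$ coordinate-wise gives $\A_{L^A \otimes_K H_{\overline{A}}} \cong \O_{L^A} \otimes_{\O_K} \A_{\overline{A}}$. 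Tensoring the hypothesis $\O_{L^B} = \A_{\overline{A}} \cdot \beta$ with $\O_{L^A}$ over $\O_K$ then produces $\O_L = \A_{L^A \otimes_K H_{\overline{A}}} \cdot \beta$, the desired freeness.

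For part (ii), first observe that $H_B$ gives a Hopf-Galois structure on the unramified extension $L^A/K$ (Proposition \ref{prop_H_B_HGS}); by the standard theorem on Hopf-Galois structures over unramified local extensions, $\O_{L^A}$ is then a free rank-one $\A_B$-module, with some generator $\alpha$. Proposition \ref{prop_NBG}(ii) shows that $\alpha \beta$ is a free $H_G$-generator of $L$, so the map $\A_G \to \O_L$, $h \mapsto h[\alpha\beta]$, is injective. To prove surjectivity, given $\eta \in \O_L$, use part (i) to write $\eta = \xi[\beta]$ for some $\xi \in \A_{L^A \otimes_K H_A}$; one then applies unramified Galois descent along $\O_{L^A}/\O_K$ to identify $\A_{L^A \otimes_K H_A} = \O_{L^A} \otimes_{\O_K} \A_A^\sharp$ for a suitable $\O_K$-order $\A_A^\sharp \subseteq H_A$ whose elements preserve $\O_L$. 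Decomposing $\xi = \sum_i y_i \otimes w_i$ with $y_i = z_i[\alpha] \in \O_{L^A}$ (where $z_i \in \A_B$) and $w_i \in \A_A^\sharp$, and applying equation (\ref{eqn_H_G_action_on_product}),
\[
\eta = \sum_i y_i \, w_i[\beta] = \sum_i w_i[\beta] \, z_i[\alpha] = \sum_i (w_i z_i)[\alpha\beta].
\]
Each $w_i z_i$ lies in $\A_G$: the element $z_i$ preserves $\O_L$ because its action on $L \cong L^A \otimes_K L^B$ is the identity on the $L^B$-factor (so $z_i(\O_L) = z_i(\O_{L^A}) \otimes \O_{L^B} \subseteq \O_L$), while $w_i$ preserves $\O_L$ by construction of $\A_A^\sharp$. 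Hence $\eta \in \A_G \cdot (\alpha\beta)$.

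The main obstacle is the unramified Galois descent $\A_{L^A \otimes_K H_A} = \O_{L^A} \otimes_{\O_K} \A_A^\sharp$, which requires a careful study of the $(B,\circ)$-action on $L[A,\cdot]^{(A,\circ)} \cong L^A \otimes_K H_A$ coming from the skew brace action (\ref{eqn_skew_brace_HGS_action}) and its interaction with the associated order. This step reconciles the smash product structure $H_G \cong H_A \#_K H_B$ (Proposition \ref{prop_H_G_smash_product}) with the integral data on each factor, generalising the simpler tensor-product analysis of \cite[Theorem 5.16]{GR20} from the induced case, where $H_G = H_A \otimes_K H_B$ and $\A_G = \A_A \otimes_{\O_K} \A_B$ directly.
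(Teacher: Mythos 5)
Your proof is correct and follows essentially the same route as the paper: part \ref{enum_integral_1} via the unramified decomposition $\O_{L}\cong\O_{L^{A}}\otimes_{\O_{K}}\O_{L^{B}}$ together with Proposition \ref{prop_H_A_vs_H_A_bar}, and part \ref{enum_integral_2} via freeness of $\O_{L^{A}}$ over $\A_{B}$, integral Galois descent giving $\A_{L^{A}\otimes H_{A}}=\O_{L^{A}}\otimes_{\O_{K}}\Gamma_{A}$, and the smash-product action formula \eqref{eqn_H_G_action_on_product} applied to the generator $\alpha\beta$. The only cosmetic differences are that the paper identifies the associated order in part \ref{enum_integral_1} by invoking uniqueness of the order over which $\O_{L}$ can be free rather than by direct computation, and in part \ref{enum_integral_2} packages your injectivity-plus-surjectivity argument as the sandwich $\O_{L}=(\Gamma_{A}\A_{B})[\alpha\beta]\subseteq\A_{G}(\alpha\beta)\subseteq\O_{L}$.
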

\begin{proof}
The assumption that $ L^{A}/K $ is unramified implies that the natural map $ \O_{L^{A}} \otimes_{\O_{K}} \O_{L^{B}} \rightarrow \O_{L} $ is an isomorphism of $ \O_{L^{A}} $-algebras and $ \O_{L^{B}} $-algebras \cite[(2.13)]{FT91}. 

Let $ \beta \in \O_{L^{B}} $ be a free generator of $ \O_{L^{B}} $ as an $ \A_{\overline{A}} $-module. Then (similarly to the proof of Proposition \ref{prop_NBG} part \ref{enum_NBG_1}) we have $ \O_{L^{A}} \otimes_{\O_{K}} \O_{L^{B}} = \O_{L^{A}} \otimes_{\O_{K}} \A_{\overline{A}} [\beta] $, so (making natural identifications) $ \O_{L} = (\O_{L^{A}} \otimes_{\O_{K}} \A_{\overline{A}}) [\beta] $. Comparing ranks over $ \O_{K} $, we see that this action is free. We can view $ \O_{L^{A}} \otimes_{\O_{K}} \A_{\overline{A}} $ as an $ \O_{L^{A}} $-order in $ L^{A} \otimes_{K} H_{\overline{A}} $, and by Proposition \ref{prop_H_A_vs_H_A_bar} the Hopf-Galois structure this gives on $ L/L^{A} $ is isomorphic to that given by $ L^{A} \otimes_{K} H_{A} $. Thus we see that $ \O_{L} $ is free over an $ \O_{L^{A}} $-order in $ L^{A} \otimes H_{A} $, which therefore coincides with its associated order $ \A_{L^{A} \otimes H_{A}} $ in $ L^{A} \otimes_{K} H_{A} $. This establishes \ref{enum_integral_1}. 

We begin the proof of \ref{enum_integral_2} with two further consequences of $ L^{A}/K $ being unramified. The first is that $ \O_{L^{A}} $ is a free $ \A_{B} $-module \cite[Theorem 3.4]{Tr11}; we let $ \alpha \in \O_{L^{A}} $ be a free generator. The second is that $ \O_{L^{A}} / \O_{K} $ is a Galois extension of commutative rings with group $ (B,\circ) $ in the sense of \cite{CHR65}, so (by Galois descent at integral level) there exists a unique $ \O_{K} $-order $ \Gamma_{A} $ in $ H_{A} $ such that $ \A_{L^{A} \otimes H_{A}} = \O_{L^{A}} \otimes_{\O_{K}} \Gamma_{A} $. 

Since $ H_{G} \cong H_{A} \; \# \; H_{B} $, the $ \O_{K} $-module $ \Gamma_{A}\A_{B} $ is an $ \O_{K} $-lattice in $ H_{G} $. On one hand we have
\[ (\Gamma_{A}\A_{B})[\O_{L}] = (\Gamma_{A}\A_{B})[\O_{L^{B}}\O_{L^{A}}] = (\Gamma_{A}[\O_{L^{B}}]\A_{B}[\O_{L^{A}}] \subseteq \O_{L^{B}}\O_{L^{A}} = \O_{L} \]
by \eqref{eqn_H_G_action_on_product}, so $ \O_{L^{B}}\O_{L^{A}} \subseteq \A_{G} $. On the other hand, \eqref{eqn_H_G_action_on_product} also implies that
\begin{align*}
(\Gamma_{A}\A_{B})[\alpha\beta] & = \A_{B}[\alpha]\Gamma_{A}[\beta] \\
& = \O_{L^{A}}\Gamma_{A}[\beta] \\
& = (\O_{L^{A}} \otimes_{\O_{K}} \Gamma_{A})[\beta] \\
& = \A_{L^{A} \otimes H_{A}}[\beta] \\
& = \O_{L}.
\end{align*}
Hence
\[ \O_{L} = (\Gamma_{A}\A_{B})[\alpha\beta] \subseteq \A_{G}(\alpha\beta) \subseteq \A_{G}(\O_{L}) \subseteq \O_{L}, \]
so $ \Gamma_{A}\A_{B} = \A_{G} $ and $ \O_{L} $ is a free $ \A_{G} $-module. 
\end{proof}

\bibliographystyle{plain}

\end{document}